\newcommand*{\e}{\varepsilon}
\newcommand*{\la}{\langle}
\newcommand*{\ra}{\rangle}
\def\E{\mathbb E}
\def\ag#1{{\color{black}#1}}
\def\at#1{{\color{black}#1}}
\def\att#1{{\color{black}#1}}
\def\attt#1{{\color{black}#1}}
\def\atttt#1{{\color{black}#1}}
\numberwithin{equation}{section}
\theoremstyle{plain}
\newtheorem{theorem}{Теорема}
\newtheorem{assumption}{Предположение}
\newtheorem{lemma}{Лемма}
\newtheorem{proof}{Доказательство}
\newcommand*{\norm}[1]{\left\lVert#1\right\rVert_2}
\begin{document}

\udk{519.85}


\author{Д.М.~Двинских}
\address{Weierstrass Institute, Berlin; \\ Московский физико-технический институт, Москва; \\ Институт проблем передачи информации РАН}
\email{darina.dvinskikh@wias-berlin.de}

\author{А.И.~Тюрин}
\address{Высшая школа экономики, Москва}
\email{alexandertiurin@gmail.com}


\author{А.В.~Гасников}
\address{Московский физико-технический институт, Москва; \\
Институт проблем передачи информации РАН; \\ Высшая школа экономики}
\email{gasnikov@yandex.ru}

\author{C.C.~Омельченко}
\address{Московский физико-технический институт, Москва}
\email{sergey.omelchenko@phystech.edu}

\title{Ускоренный и неускореный стохастический градиентный спуск в модельной общности}

\markboth{Д.\,М.~Двинских, А.\,И.~Тюрин,
А.\,В.~Гасников, C\,.C.~Омельченко}
{Ускоренный и неускореный стохастический градиентный спуск}

\maketitle

\begin{fulltext}
\begin{abstract}
В статье описывается новый способ получения оценок скорости сходимости оптимальных методов решения задач гладкой (сильно) выпуклой стохастической оптимизации. Способ базируется на получение результатов стохастической оптимизации на основе результатов о сходимости оптимальных методов в условиях неточных градиентов с малыми шумами неслучайной природы. В отличие от известных ранее результатов в данной работе все оценки получаются в модельной общности. 

 Библиография: 12 названий.

\end{abstract}

\begin{keywords}
стохастическая оптимизация, ускоренный градиентный спуск, модельная общность, композитная оптимизация 
\end{keywords}

\footnotetext{Работа А.И. Тюрина в п. 3 поддержана грантом РФФИ 19-31-90062 Аспиранты. Работа А.В. Гасникова в п. 2 была поддержана грантом РФФИ 18-31-20005 мол\_а\_вед.
Работа Д.М.~Двинских в п. 1 была выполнена при поддержке Министерства науки и высшего образования Российской Федерации (госзадание) № 075-00337-20-03, номер проекта 0714-2020-0005.}

 \section{Введение} \label{section_1}
В данной работе рассматривается задача стохастической оптимизации \cite{gasnikov2017,devolder2013,lan2019}
\begin{equation}
\label{Problem}
f(x) = \E[f(x,\xi)] \to \min_{x\in Q \subseteq \mathbb{R}^n}, 
\end{equation}
где множество $Q$ предполагается выпуклым и замкнутым, \at{$\xi$~--- случайная величина, математическое ожидание $\E[f(x,\xi)]$ определено и конечно для любого $x \in Q$,}  функция $f(x)$ -- $\mu$-сильно выпуклая в 2-норме ($\mu \geq 0$) и имеющая $L$-Липшицев градиент, т.е. для всех $x,y\in Q$
\at{$$f(x) + \la \nabla f(x), y - x \ra + \frac{\mu}{2}\|y-x\|_2^2 \leq f(y) \leq f(x) + \la \nabla f(x), y - x \ra + \frac{L}{2}\|y-x\|_2^2.$$}
   
Предположим, что есть доступ к $\nabla f(x,\xi)$ -- стохастическому градиенту $f(x)$, удовлетворяющему следующим условиям\footnote{Заметим, что для задач минимизации функционалов вида суммы условие ограниченности (субгауссовской) дисперсии может не выполняться даже в очень простых (квадратичных) ситуациях. Как следствие, в общем случае приводимые далее результаты не распространяются на задачи минимизации функционалов вида суммы, в которых в качестве стохастического градиента выбирается градиент случайно выбранного слагаемого \cite{assran2020}.} (несмещенность и субгауссовость хвостов распределения, с субгауссовской дисперсией $\sigma^2$)
\begin{equation}
\label{stoch_gradient_subgaussion}
\E \left[\nabla f(x, \xi)\right] \equiv \nabla f(x),
\E\left[\exp\left(\frac{ \|\nabla f(x, \xi)-\E[\nabla f(x, \xi)]\|_2^2}{\sigma^2}\right)\right]\le \exp(1),
\end{equation}
для всех $x \in Q$.

Тогда после $N$ вычислений $\nabla f(x,\xi)$ с большой вероятностью имеем\footnote{Здесь и далее \textquotedblleftс большой вероятностью\textquotedblright\ -- означает с вероятностью $\ge 1 - \gamma$, а $\tilde{O}(\cdot)$ означает то же самое, что $O(\cdot)$, только числовой множитель зависит от $\ln \left( N/\gamma\right)$.} \cite{gasnikov2017,devolder2013,lan2019}
\begin{equation}
\label{estimate_stochastic}
f(x_N) - f(x_*) = \tilde{O}\left(\min\left\{\frac{LR^2}{N^p} + \frac{\sigma R}{\sqrt{N}},
\at{LR^2}
\exp\left(-\left(\frac{\mu}{L}\right)^{\frac{1}{p}}\frac{N}{2}\right) + \frac{\sigma^2}{\mu N}\right\}\right),
\end{equation}
где $x_*$ -- решение задачи \eqref{Problem}, $R = \|x_0 - x_*\|_2$, $x_0$ -- точка старта,
$p=1$ отвечает стохастическому градиентному спуску,
а $p=2$ ускоренному стохастическому спуску. 

С другой стороны известно (см. \cite{gasnikov2017,devolder2013,turin2019,gorbunov2019,stonyakin2019}), что если для задачи \eqref{Problem} доступен неточный градиент $\nabla_{\delta} f(x)$, удовлетворяющий для всех $x,y \in Q$ ослабленному условию $L$-Липшицевости градиента
\begin{equation}
	f(x) + \la \nabla_{\delta} f(x), y-x \ra + \frac{\mu}{2}\|y-x\|^2_2  - \delta_1 \le f(y)  \le f(x) + \la \nabla_{\delta} f(x), y-x \ra + \frac{L}{2}\|y-x\|^2_2 + \delta_2,
\end{equation}
то после $N$ вычислений $\nabla_{\delta} f(x)$ для соответствующих модификаций градиентного и ускоренного градиентного спуска можно получить оценку, аналогичную  \eqref{estimate_stochastic}\footnote{\att{Для $p=2$ нужно ввести дополнительные ограничения на $\delta_1$, см. ниже}.}
\begin{equation}
\label{estimate_inexact}
\tilde{O}\left(\min\left\{\frac{LR^2}{N^p} + \delta_1 + N^{p-1}\delta_2,
\at{LR^2}
\exp\left(-\left(\frac{\mu}{L}\right)^{\frac{1}{p}}\frac{N}{2}\right) + \delta_1 + \left(\frac{L}{\mu}\right)^{\frac{p-1}{2}}\delta_2 \right\}\right).
\end{equation}


В данной статье подмечается, что результат \eqref{estimate_stochastic} может быть получен\footnote{\ag{В сильно выпуклом случае только в смысле сходимости по математическому ожиданию, без оценки вероятностей больших отклонений.}} из результата \eqref{estimate_inexact}. 
Более того, сделанное наблюдение, оказывается возможным провести и в модельной общности.

Данная работа имеет следующую структуру. В разделе~\ref{section_2} рассматривается концепция неточного градиента функции и для нее приводится соответствующая теорема сходимости, дополнительно для задачи из \eqref{Problem} приводится простой способ того, как можно получить оптимальные оценки. В разделе~\ref{section_3} рассматривается концепция неточной модели функции и доказываются все основные результаты. 
\atttt{Отметим, что в начале раздела~\ref{section_3} и разделе~\ref{subsection:maxmin} приводятся примеры некоторых классов негладких задач (композитная оптимизация и оптимизация максимума нескольких гладких функций), для которых возможно применять предложенные концепции неточной модели функции. Это позволяет говорить о том, что полученные в работе результаты об оценках сложности, эффективные на классах выпуклых гладких задач, верны и для некоторых типов выпуклых негладких задач.}

\section{Основные результаты} \label{section_2}
Ограничимся для компактности изложения пояснением перехода от \eqref{estimate_inexact} к \eqref{estimate_stochastic} для случая $\mu = 0$, и с теми же целями переопределим $R = \max_{x,y \in Q} \|x-y\|_2$ (в действительности, все приведенные далее в этом разделе результаты верны для $R = \|x_0 - x_*\|_2$; показывается аналогично \cite{gorbunov2019}). \att{Будем далее дополнительно предполагать, что в ослабленном условии $L$-Липшицевости градиента для неточного градиента $\nabla_{\delta} f(x)$ ошибка $\delta_1$ зависит от $y$ и $x$:
\begin{equation}
\label{inexact_oracle_delta_1}
	f(x) + \la \nabla_{\delta} f(x), y-x \ra  - \delta_1(y,x) \le f(y)  \le f(x) + \la \nabla_{\delta} f(x), y-x \ra + \frac{L}{2}\|y-x\|^2_2 + \delta_2,
\end{equation}
}
Первое важное наблюдение заключается, в следующем (доказательство более общего утверждения вынесено в Раздел 3).
\at{
\begin{assumption}
\label{assumption:delta}
Пусть даны две последовательности \att{$\delta_1^k(y,x)$} и $\delta_2^k$ ($k \geq 0$). Будем предполагать, что
\begin{center}
$\E \left[\delta_1^k(y,x) |\delta_{1,2}^{k-1},\delta_{1,2}^{k-2},... \right]= 0$,  (условная несмещенность)
\end{center}
 $\delta_1^k(y,x)$ имеет  $\left(\hat{\delta}_1\right)^2$-субгауссовскую условную дисперсию, $\sqrt{\delta_2^k}$ имеет $\hat{\delta}_2$-суб\-гаус\-совс\-кий условный  второй момент.
\end{assumption}
}

\att{
\begin{assumption}
\label{assumption:delta_4}
Пусть даны две последовательности $\delta_1^k(x,y)$ и $\delta_2^k$ ($k \geq 0$). Случайная величина $\delta_1^k(x, y)$ имеет  $\left(\hat{\delta}_1^k(x - y)\right)^2$-субгауссовский условный момент ($\hat{\delta}_1^k(\cdot)$ есть неслучайная функция от одного аргумента) такой, что 
\begin{enumerate}
    \item $\hat{\delta}_1^k(\alpha z) \leq \alpha \hat{\delta}_1^k(z)$ для всех $\alpha \geq 0$ и $z \in B(0, R)$.
    \item $\hat{\delta}_1 < +\infty$, где $\hat{\delta}_1 \geq \sup_{z \in B(0, R)} \hat{\delta}_1^k(z)$.
\end{enumerate}
\end{assumption}
}

\att{
\begin{theorem}
\label{thm1}
    Пусть для последовательностей $\delta_1^k(y,x)$ и $\delta_2^k$ ($k \geq 0$) верно предположение~\ref{assumption:delta}, тогда 
    \begin{enumerate}
    \item 
    После $N$ шагов соответствующей модификации градиентного спуска будет верно следующее неравенство:
    \begin{align*}
    \E[f(x_N)] - f(x_*) \leq O\left(\frac{LR^2}{N} + \hat{\delta}_2\right).
    \end{align*}
    И с большой вероятностью
    \begin{equation*}
    f(x_N) - f(x_*) = \tilde{O}\left(\frac{LR^2}{N} + \frac{\hat{\delta}_1}{\sqrt{N}} + \hat{\delta}_2\right).
    \end{equation*}
    \item
    После $N$ шагов соответствующей модификации ускоренного градиентного спуска будет верно следующее неравенство:
    \begin{align*}
    \E[f(x_N)] - f(x_*) \leq O\left(\frac{LR^2}{N^2} + N \hat{\delta}_2\right).
    \end{align*}
    Если дополнительно верно предположение~\ref{assumption:delta_4}, то с большой вероятностью
    \begin{equation*}
    f(x_N) - f(x_*) = \tilde{O}\left(\frac{LR^2}{N^2} + \frac{\hat{\delta}_1}{\sqrt{N}} + N\hat{\delta}_2\right).
    \end{equation*}
    \end{enumerate}
\end{theorem}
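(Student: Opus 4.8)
The plan is to deduce Theorem~\ref{thm1} from the \emph{deterministic} convergence guarantees for methods with an inexact oracle of type~\eqref{inexact_oracle_delta_1}, viewing the pair $(\delta_1^k,\delta_2^k)$ as the inexactness incurred at iteration $k$ and then averaging out its randomness. \emph{Step 1 (deterministic bound with per-step errors).} First I would run the convergence analysis of the inexact gradient method (case $p=1$) and of the inexact fast gradient method (case $p=2$) in its sharpest, non-collapsed form, i.e.\ keeping the errors step by step rather than replacing them by $\max_k\delta_i^k$ as in~\eqref{estimate_inexact}. For the appropriate weighted-average output $x_N$ this produces a bound
\[
f(x_N)-f(x_*)\ \le\ \frac{c\,LR^2}{N^{p}}\;+\;\frac{1}{W_N}\sum_{k=1}^{N}w_k\,\delta_1^k\;+\;\frac{1}{W_N}\sum_{k=1}^{N}W_k\,\delta_2^k,
\]
where for $p=1$ one takes $w_k=W_k\equiv1$, $W_N=N$, and for $p=2$ the weights are exactly the coefficients of the estimating-sequence construction, $w_k\asymp k$, $W_k\asymp k^2$, $W_N\asymp N^2$. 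In the non-accelerated case $\delta_1^k$ is naturally evaluated at $(x_*,x_k)$, which is measurable with respect to the history $\mathcal{F}_{k-1}=\sigma(\delta_{1,2}^0,\dots,\delta_{1,2}^{k-1})$; in the accelerated case the natural argument of $\delta_1^k$ involves the current prox-point $z_{k+1}$, which is \emph{not} $\mathcal{F}_{k-1}$-measurable, so one first splits the corresponding inner product according to $z_{k+1}=z_k+(z_{k+1}-z_k)$, absorbs the squared norm of the increment into the prox-decrease term (this is what enlarges $\delta_2^k$), and is left with $\delta_1^k$ evaluated at the $\mathcal{F}_{k-1}$-measurable pair $(z_k,y_k)$. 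This rearrangement is what forces Assumptions~\ref{assumption:delta} and~\ref{assumption:delta_4}: by the homogeneity and uniform-bound clauses of Assumption~\ref{assumption:delta_4}, the conditional subgaussian scale of that residual, $\hat{\delta}_1^k(z_k-y_k)$, is at most $2\hat\delta_1$ since $z_k,y_k\in B(0,R)$.

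\emph{Step 2 (expectation bounds).} Taking $\E$ of the displayed inequality: the arguments of $\delta_1^k$ being $\mathcal{F}_{k-1}$-measurable after Step~1, conditional unbiasedness and the tower property give $\E\bigl[\sum_k w_k\delta_1^k\bigr]=0$, while $\E[\delta_2^k\mid\mathcal{F}_{k-1}]\le\hat\delta_2$. Hence $\E[f(x_N)]-f(x_*)\le cLR^2/N^{p}+\hat\delta_2\,W_N^{-1}\sum_{k\le N}W_k$, which is $O(LR^2/N+\hat\delta_2)$ for $p=1$ and, since $\sum_{k\le N}k^2\asymp N^3$ and $W_N\asymp N^2$, is $O(LR^2/N^2+N\hat\delta_2)$ for $p=2$.

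\emph{Step 3 (high-probability bounds).} I would bound the two random sums separately and take a union bound. The sum $M_N:=W_N^{-1}\sum_k w_k\delta_1^k$ is a martingale whose $k$-th increment has conditional subgaussian parameter at most $(w_k/W_N)\hat\delta_1$ — for $p=1$ directly by Assumption~\ref{assumption:delta}, for $p=2$ by the bound obtained in Step~1. A subgaussian (Azuma-type) martingale tail inequality then yields, with probability $\ge1-\gamma$,
\[
|M_N|\ \le\ C\,\hat{\delta}_1\sqrt{\frac{\ln(1/\gamma)}{W_N^{2}}\sum_{k=1}^{N}w_k^{2}}\ =\ \tilde{O}\!\left(\frac{\hat{\delta}_1}{\sqrt{N}}\right),
\]
because $W_N^{-2}\sum_{k\le N}w_k^2$ equals $1/N$ when $w_k\equiv1,\ W_N=N$, and is $\asymp N^{-4}\sum_{k\le N}k^2\asymp 1/N$ when $w_k\asymp k,\ W_N\asymp N^2$. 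For the last sum, $\delta_2^k\ge0$ with $\sqrt{\delta_2^k}$ conditionally subgaussian makes $\delta_2^k$ conditionally subexponential at scale $\hat\delta_2$, so a Bernstein-type deviation inequality for weighted sums of subexponential martingale differences bounds $W_N^{-1}\sum_k W_k\delta_2^k$ by its mean plus a fluctuation of strictly smaller order up to a $\ln(1/\gamma)$ factor, i.e.\ by $\tilde{O}(\hat\delta_2)$ for $p=1$ and $\tilde{O}(N\hat\delta_2)$ for $p=2$. Adding the two estimates on the complement of an event of probability $\le2\gamma$ and relabelling $\gamma$ gives the claimed bounds.

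\emph{Main obstacle.} Step~2 is routine; the real work is Step~1 together with the martingale concentration of Step~3. One must carry out the \emph{accelerated} analysis so that $\delta_1^k$ decomposes into a history-measurable, correctly weighted martingale term plus a remainder that is genuinely absorbable into $\delta_2^k$ and the prox telescoping — this bookkeeping is exactly what dictates the form of Assumptions~\ref{assumption:delta} and~\ref{assumption:delta_4} — and then verify that along the (random) trajectory the increments really do have conditional subgaussian scale $\lesssim w_k\hat\delta_1/W_N$, so that the quadratic-variation estimate $\sum_k w_k^2\hat\delta_1^2/W_N^2\asymp\hat\delta_1^2/N$ holds uniformly in both regimes.
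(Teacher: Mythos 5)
Your proposal is correct and follows essentially the same route as the paper: your Step~1 is exactly Theorems~\ref{theorem:gm_model} and~\ref{theorem:fgm_model} (with weights $q^{N-i}$ for the gradient method and $\alpha_{k+1}\asymp k$, $A_k\asymp k^2$, $A_N\asymp N^2$ for the fast one), and Theorem~\ref{thm1} is then obtained as the specialization of Theorems~\ref{theorem:gm_model_subgaussion} and~\ref{theorem:fgm_model_subgaussion} to the model $\psi_{\delta}(y,x)=\la \nabla_{\delta} f(x), y-x\ra$. Your Steps~2--3 (tower property for the expectation, Azuma-type concentration for the subgaussian martingale part, Bernstein-type concentration for the subexponential $\delta_2$-part, quadratic variation $\asymp \hat{\delta}_1^2/N$ in both regimes) are precisely the argument sketched in the proofs of those two theorems.

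The one place where you deviate is the taming of the $\delta_1$-term in the accelerated case. You propose splitting the non-measurable prox point, $z_{k+1}=z_k+(z_{k+1}-z_k)$, and absorbing the increment into the prox-decrease via Young's inequality; this is the classical AC-SA bookkeeping and it does work for Theorem~\ref{thm1}, because there $\delta_1(y,x)=\la \nabla f(x)-\nabla f(x,\xi), y-x\ra$ is linear in $y-x$. The paper never produces a non-measurable argument at all: Lemma~\ref{lemma:fast_strong_main} leaves two $\delta_1$-terms evaluated at the history-measurable pairs $(x_k,y_{k+1})$ and $(x_*,y_{k+1})$, the first carrying the a priori too-large weight $A_k\asymp k^2$, and then the identity $x_k-y_{k+1}=\tfrac{\alpha_{k+1}}{A_k}(y_{k+1}-u_k)$ combined with the positive homogeneity of $\hat{\delta}_1^k(\cdot)$ from Assumption~\ref{assumption:delta_4} (and of $\tilde{\delta}_1^k(\cdot)$ from Assumption~\ref{assumption:delta_3} for the expectation bound) reduces the effective conditional scale of that term to $\alpha_{k+1}\hat{\delta}_1$. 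Both devices produce the same effective weights $w_k\asymp k$ and hence the same $\hat{\delta}_1/\sqrt{N}$ fluctuation; the paper's version has the advantage of surviving in the general model setting, where $\psi_{\delta}$ and $\delta_1$ need not be linear in $y-x$ (e.g.\ the max-of-smooth-functions model of Section~\ref{subsection:maxmin}) and your inner-product splitting is unavailable --- which is exactly why Assumption~\ref{assumption:delta_4} is formulated as a homogeneity condition on the subgaussian scale rather than as a statement about inner products.
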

}

Данная теорема является следствием теоремы~\ref{theorem:gm_model_subgaussion} и \ref{theorem:fgm_model_subgaussion} из Раздела 3 для модели вида $\psi_{\delta}(y,x) = \la \nabla_{\delta} f(x), y-x \ra$.

\ag{Если 
в качестве $\nabla_{\delta}f(x)$ взять $\nabla f(x,\xi)$, тогда будет выполнено неравенство \eqref{inexact_oracle_delta_1} с
$\delta_1(y,x) = \la \nabla f(x) - \nabla f(x,\xi), y - x \ra$, $\delta_2 = \frac{1}{2L}\|\nabla f(x,\xi) - \nabla f(x)\|_2^2$,
с $L:=2L$. Чтобы это понять, достаточно заметить (первое важное наблюдение), что $$\la \nabla f(x) - \nabla f(x,\xi) , y - x \ra \le \frac{1}{2L}\|\nabla f(x,\xi) - \nabla f(x)\|_2^2 + \frac{L}{2}\|y-x\|_2^2.$$} Более того, для $\delta_1(y,x)$ и $\delta_2$ верно (см. обозначения предположения~\ref{assumption:delta}), что $\hat{\delta}_1 = O(\sigma R)$ и $\hat{\delta}_2 = O(\sigma^2/L)$, \att{и для $\delta_1(y,x)$ верно предположение~\ref{assumption:delta_4}}.

Сделанное наблюдение позволяет \ag{с помощью теоремы~\ref{thm1} } получить, например, что с большой вероятностью
\begin{equation}\label{estimate_new1}
f(x_N) - f(x_*) = \tilde{O}\left(\frac{LR^2}{N^p} + \frac{\sigma R}{\sqrt{N}} + N^{p-1}\frac{\sigma^2}{L}\right),
\end{equation}
причем
\begin{equation*}
\E[f(x_N)] - f(x_*) = O\left(\frac{LR^2}{N^p} +  N^{p-1}\frac{\sigma^2}{L}\right).
\end{equation*}

\ag{Отметим также возможность при $p=1$ выбора шага $h$ в базовом детерминированном градиентном спуске меньше чем $1/L$. 
В этом случае оценка будет иметь вид $$\E[f(x_N)] - f(x_*) = O\left(\frac{R^2}{hN} +  h\sigma^2\right).$$
Минимизируя правую часть по $h$, получим $h = R/\left(\sigma\sqrt{N}\right)$ и $$\E[f(x_N)] - f(x_*) = O\left(\frac{\sigma R}{\sqrt{N}}\right).$$
Аналогичные оценки можно выписать и в категориях больших отклонений. 
}

Вторым важным наблюдением является следующая теорема (см., например, \cite{gorbunov2019}). 
\begin{theorem}\label{thm2}(Батчинг)
 Пусть $\{\xi^l\}_{l=1}^r$ -- независимые одинаково распределенные случайные величины (также как случайная величина $\xi$, которая имеет субгауссовскую дисперсию $\sigma^2$). Тогда для $\sigma^2_r$ -- субгауссовской дисперсии $$\nabla^r f\left(x,\{\xi\}_{l=1}^r\right)=\frac{1}{r}\sum_{l=1}^r \nabla f(x,\xi^l),$$ справедлива оценка $\sigma^2_r = O\left(\sigma^2/r\right)$. 
\end{theorem}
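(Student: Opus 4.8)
\smallskip
\noindent\emph{Sketch of a proof.} The plan is to reduce the statement to a dimension-free Hoeffding/Azuma-type inequality for sums of independent random vectors with sub-Gaussian norm. First I would put $Z^{(l)} := \nabla f(x,\xi^l) - \nabla f(x)$, $l = 1,\dots,r$: by \eqref{stoch_gradient_subgaussion} these are i.i.d., $\E[Z^{(l)}] = 0$ and $\E\left[\exp\left(\norm{Z^{(l)}}^2/\sigma^2\right)\right] \le \exp(1)$, while the error of the averaged gradient is $W := \nabla^r f\!\left(x,\{\xi\}_{l=1}^r\right) - \nabla f(x) = \frac1r\sum_{l=1}^r Z^{(l)}$; the target is some $\sigma_r^2 = O(\sigma^2/r)$ with $\E\left[\exp\left(\norm{W}^2/\sigma_r^2\right)\right] \le \exp(1)$. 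Note first that convexity of $v \mapsto \exp(\norm{v}^2/\sigma^2)$ together with Jensen's inequality give only $\sigma_r^2 \le \sigma^2$ (averaging never hurts the sub-Gaussian variance but does not by itself improve it); the factor $1/r$ must come from independence.

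Second, I would pass to tails, which are equivalent to the Orlicz condition up to absolute constants: Markov's inequality gives $\PP\left(\norm{Z^{(l)}} \ge t\right) \le \exp\left(1 - t^2/\sigma^2\right)$, and conversely integrating a tail $\PP(\norm{Y} \ge t) \le 2\exp\left(-t^2/(2a^2)\right)$ recovers $\E\left[\exp\left(\norm{Y}^2/(Ca^2)\right)\right] \le \exp(1)$ for a suitable absolute $C$. Hence it suffices to prove $\PP\left(\norm{\sum_{l=1}^r Z^{(l)}} \ge s\right) \le 2\exp\left(-c s^2/(r\sigma^2)\right)$ with an absolute $c > 0$: then $\norm{W}$ is sub-Gaussian at scale $O(\sigma/\sqrt r)$ and $\sigma_r^2 = O(\sigma^2/r)$ follows. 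This is precisely the batching inequality, which one can quote from \cite{gorbunov2019}.

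If one prefers to reprove it, I would track the partial sums $S_0 := 0$, $S_k := S_{k-1} + Z^{(k)}$ through the exponential supermartingale $M_k := \exp\left(\norm{S_k}^2/v_k\right)$ with an increasing deterministic sequence $v_k \asymp k\sigma^2$ (large absolute constant). Expanding $\norm{S_k}^2 = \norm{S_{k-1}}^2 + 2\la S_{k-1}, Z^{(k)}\ra + \norm{Z^{(k)}}^2$ and using that, conditionally on $Z^{(1)},\dots,Z^{(k-1)}$, the scalar $\la S_{k-1}, Z^{(k)}\ra$ is mean-zero and $O(\sigma\norm{S_{k-1}})$-sub-Gaussian (because $|\la u, Z^{(k)}\ra| \le \norm{Z^{(k)}}$, $\E[Z^{(k)}] = 0$, and the tail above), together with $\E\left[\exp\left(\norm{Z^{(k)}}^2/\sigma^2\right)\right] \le \exp(1)$, a Cauchy--Schwarz split of the conditional expectation shows that, up to a factor $\exp\left(O(\sigma^2/v_k)\right)$ whose product over $k$ stays bounded, $M_k$ is a supermartingale; hence $\E[M_r] = O(1)$, and $\norm{S_r}$, therefore $\norm{W} = \norm{S_r}/r$, has the claimed sub-Gaussian tail. (This route gives $O(\sigma^2/r)$ up to a logarithmic-in-$r$ factor, immaterial for the final complexity bounds; the sharp version is the one in \cite{gorbunov2019}.)

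I expect the only real difficulty to be the dimension-freeness of this vector concentration. The naive route --- bounding the moment generating function of $\la u, \sum_l Z^{(l)}\ra$ along each fixed unit vector $u$ and then returning to $\norm{\cdot}$ --- loses a factor equal to the ambient dimension $n$, since a vector in $\R^n$ that is $\tau$-sub-Gaussian in every direction only satisfies $\E\left[\exp\left(\norm{\cdot}^2/(C\tau^2)\right)\right] \le \exp(n/2)$, which would defeat the purpose of batching. Controlling the norm of the partial sums directly (as in the supermartingale above) is exactly what circumvents this, and is the content of the cited batching lemma. The remaining ingredients --- the two Markov/tail conversions, the one-dimensional sub-Gaussianity of $\la u, Z^{(l)}\ra$, and the choice of $v_k$ --- are routine.
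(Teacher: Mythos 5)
The paper offers no proof of this theorem at all: it is stated as a known fact with a pointer to \cite{gorbunov2019}, so your sketch is strictly more detailed than the paper's own treatment. Your reduction is the right one and matches what the cited sources actually do: Jensen alone only gives $\sigma_r^2\le\sigma^2$, the gain of $1/r$ must come from independence, and the substantive ingredient is a \emph{dimension-free} deviation bound for $\norm{\sum_{l}Z^{(l)}}$ --- exactly the Juditsky--Nemirovski large-deviation inequality for vector martingales in $2$-smooth normed spaces that the paper itself invokes elsewhere (\cite{juditsky2008}); your observation that the naive per-direction MGF argument loses a factor of the dimension $n$ is the correct reason this cannot be bypassed. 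One small internal inconsistency in your self-contained route: with $v_k\asymp k\sigma^2$ the per-step correction factors $\exp\left(O(\sigma^2/v_k)\right)$ have product $\exp\left(O\left(\sum_{k=1}^r 1/k\right)\right)=r^{O(1)}$, which is \emph{not} bounded, so $\E[M_r]=O(1)$ does not follow as stated; this polynomial prefactor is precisely the source of the logarithmic loss you acknowledge in parentheses, and the sharp $O(\sigma^2/r)$ requires the $2$-smoothness-based argument of \cite{juditsky2008}, \cite{gorbunov2019}. Since the paper only ever uses $\sigma_r^2$ inside $\tilde O(\cdot)$ bounds, either version suffices for its purposes.
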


Для обоснования перехода от \eqref{estimate_inexact} к \eqref{estimate_stochastic} положим в \attt{\eqref{inexact_oracle_delta_1}}  $$\nabla_{\delta} f(x) = \nabla^r f\left(x,\{\xi\}_{l=1}^r\right)$$ и подберем должным образом $r$. Для подбора $r$ потребуем, чтобы правая часть в оценке \eqref{estimate_stochastic} была равна $\e$ (желаемой точности решения задачи по функции). Чтобы добиться этого исходя из формулы \eqref{estimate_new1} согласно теореме~\ref{thm2} нужно выбрать $r$ так, чтобы все слагаемые в \eqref{estimate_new1} были порядка $\e$. То есть 
\begin{center}
$\left(\frac{LR^2}{N^p}\right) \simeq \e$,  $\frac{\sigma R}{\sqrt{N}} \simeq \e$, $N^{p-1}\frac{\sigma^2}{L}\simeq \e$.     
\end{center}
Получается переопределенная система уравнений на $N,r$, которая, тем не менее, оказывается совместной 
\att{$r = \tilde{O}\left(\frac{\sigma^2}{L\e}\left(\frac{LR^2}{\e}\right)^\frac{p-1}{p}\right)$}. При этом, 
\begin{center}
число итераций алгоритма -- 
$N = \tilde{O}\left(\left(\frac{LR^2}{\e}\right)^{\frac{1}{p}}\right)$,
\end{center}
\begin{center}
а число вычислений $\nabla f(x,\xi)$ -- $\tilde{O}\left(\max\left\{\left(\frac{LR^2}{\e}\right)^{\frac{1}{p}},\frac{\sigma^2R^2}{\e^2}\right\}\right) = \tilde{O}\left(\left(\frac{LR^2}{\e}\right)^{\frac{1}{p}} + \frac{\sigma^2R^2}{\e^2}\right)$.
\end{center}
Данные оценки в точности соответствуют тому, что можно получить с помощью батчинга из оценки \eqref{estimate_stochastic}. Отметим, что при  $p=2$ данные оценки оптимальны как по числу итераций, так и по числу параллельно вычисляемых стохастических градиентов на каждой итерации \cite{woodworth2018}.

\section{Модельная общность} \label{section_3}
Результаты раздела~\ref{section_2} можно воспроизвести и в модельной общности \cite{gasnikov2017,turin2019,stonyakin2019}. Будем говорить, что функция $\psi_{\delta}(y,x)$ является $(\delta,L)$-моделью целевой функции $f(x)$, если для всех $x,y\in Q$ функция $\psi_{\delta}(y,x)$ -- выпукла по $y$, $\psi_{\delta}(x,x)\equiv 0$,
\begin{equation}\label{inexact_model}
f(x) + \psi_{\delta}(y, x) + \frac{\mu}{2}\|y-x\|^2_2 - \delta_1 \le f(y)  \le f(x) + \psi_{\delta}(y, x)+ \frac{L}{2}\|y-x\|^2_2 + \delta_2.
\end{equation}

Для задач композитной оптимизации (см., например, \cite{gasnikov2017,nesterov2018}), в которых целевая функция имеет вид $F(x) = f(x)+h(x)$, где $h(x)$ достаточна простая функция, для которой доступен субградиент, а функция $f(x)$ имеет $L$-Липшицев градиент, и для нее доступен только стохастический градиент $\nabla f(x,\xi)$, в качестве модели можно взять $\psi_{\delta} (y,x) = \la \nabla^r f\left(x,\{\xi\}_{l=1}^r\right), y - x \ra + h(y) - h(x)$. Тогда аналогично разделу~\ref{section_2}, получим, что в  \eqref{inexact_model} можно положить $L:=2L$, $\hat{\delta}_1 = O(\sigma R/r)$, $\hat{\delta}_2 = O(\sigma^2/(Lr))$. Это наблюдение позволяет перенести все результаты раздела~\ref{section_2} на задачи стохастической композитной оптимизации.

Введем следующее предположение.
\begin{assumption}
\label{assumption:delta_2}
Пусть даны две последовательности $\delta_1^k$ и $\delta_2^k$ ($k \geq 0$). Будем предполагать, что имеется некоторая константа $\tilde{\delta}_1$ такая, что
\begin{center}
$\E \left[\delta_1^k |\delta_{1,2}^{k-1},\delta_{1,2}^{k-2},... \right] \leq \tilde{\delta}_1$,
\end{center}
$\delta_1^k$ имеет  $\left(\hat{\delta}_1\right)^2$-субгауссовский условный второй момент, $\sqrt{\delta_2^k}$ имеет $\hat{\delta}_2$-суб\-гаус\-совс\-кий условный  второй момент.
\end{assumption}
Отметим, что предположение~\ref{assumption:delta_2} является более общим, чем предположение~\ref{assumption:delta}.

Обозначим $q = 1 - \frac{\mu}{L}$. Представим градиентный и быстрый градинетный метод в модельной общности (Алгоритм 1 и 2). В разделах \ref{section:gm} и \ref{section:fgm} представлены теоремы сходимости и соотвествующие доказательства.

\begin{algorithm}
\caption{Градиентный метод}
\label{algorithm:GM}
\begin{algorithmic}[1]
\STATE \textbf{Input:} Начальная точка $x_0$, константа сильной выпуклости $\mu \geq 0$, константа липшевости градиента $L > 0$.
\FOR{$k \geq 0$}
\STATE
\begin{equation*}
\phi_{k+1}(x) := \psi_{\delta_k}(x, x_k)+ \frac{L}{2} \|x-x_k\|^2_2,
\end{equation*}
\begin{equation}
\label{gm_step}
x_{k+1} := \arg\min_{x \in Q} \phi_{k+1}(x).
\end{equation}
\ENDFOR
\STATE \textbf{Output:} $y_N = \frac{1}{\sum_{i = 1}^{N} q^{N-i}}\sum_{i = 1}^{N} q^{N-i} x_i$ и 
\end{algorithmic}
\end{algorithm}


\begin{algorithm}
\caption{Быстрый градиентный метод}
\label{algorithm:FGM}
\begin{algorithmic}[1]
\STATE \textbf{Input:} Начальная точка $x_0$,
константа сильноый выпуклости $\mu \geq 0$, константа липшевости градиента $L > 0$.
\STATE Set
$y_0 := x_0$, $u_0 := x_0$, $\alpha_0 := 0$, $A_0 := \alpha_0$
\FOR{$k \geq 0$}
\STATE Константа $\alpha_{k+1}$ --- это наибольший корень уравнения  
\begin{gather}
\label{alpha_def_strong}
A_{k+1}{(1 + A_k \mu)}=L\alpha^2_{k+1},\quad A_{k+1} := A_k + \alpha_{k+1}.
\end{gather}
\STATE \begin{equation}y_{k+1} := \frac{\alpha_{k+1}u_k + A_k x_k}{A_{k+1}}.\label{y_def}\end{equation}
\STATE
\begin{equation*}
\phi_{k+1}(x)=\alpha_{k+1}\psi_{\delta_k}(x, y_{k+1}) + \frac{1 + A_k\mu}{2} \|x-u_k\|^2_2 + \frac{\alpha_{k+1} \mu}{2} \|x-y_{k+1}\|^2_2,
\end{equation*}
\begin{equation}\label{fgm_step}
u_{k+1} := \arg\min_{x \in Q}\phi_{k+1}(x).
\end{equation}
\STATE
\begin{gather}
x_{k+1} := \frac{\alpha_{k+1}u_{k+1} + A_k x_k}{A_{k+1}}. \label{x_def}
\end{gather}
\ENDFOR
\STATE \textbf{Output:} $x_N$,
\end{algorithmic}
\end{algorithm}

\subsection{Градиентный метод для оптимизационных
задач, допускающих модель функции}
\label{section:gm}

\begin{lemma}
\label{lemma:str_conv}
	Пусть $\psi(x)$~--- выпуклая функция и
	\begin{gather*}
	y = {\arg\min_{x \in Q}} \{\psi(x) + \tfrac{\beta}{2}\norm{x - z}^2 + \tfrac{\gamma}{2}\norm{x - u}^2\},
	\end{gather*}
	где $\beta \geq 0$ и $\gamma \geq 0$.
Тогда
	\begin{align*}
\psi(x) &+ \tfrac{\beta}{2}\norm{x - z}^2 + \tfrac{\gamma}{2}\norm{x - u}^2\\
&\geq \psi(y) + \tfrac{\beta}{2}\norm{y - z}^2 + \tfrac{\gamma}{2}\norm{y - u}^2 + \tfrac{\beta + \gamma}{2}\norm{x - y}^2 ,\,\,\, \forall x \in Q.
	\end{align*}
\end{lemma}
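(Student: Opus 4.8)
The plan is to recognize the function being minimized,
\[
\Phi(x) := \psi(x) + \tfrac{\beta}{2}\norm{x - z}^2 + \tfrac{\gamma}{2}\norm{x - u}^2,
\]
as a $(\beta+\gamma)$-strongly convex function on $Q$: $\psi$ is convex, and the two quadratic terms contribute strong-convexity moduli $\beta$ and $\gamma$. Its constrained minimizer over the convex set $Q$ is $y$ by hypothesis. The asserted inequality is then exactly the standard lower bound $\Phi(x) \ge \Phi(y) + \tfrac{\beta+\gamma}{2}\norm{x-y}^2$, valid for the minimizer of a strongly convex function over a convex set, so the whole proof is just that fact specialized to this $\Phi$.

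Concretely, I would first expand each quadratic exactly around $y$, using $\tfrac{\beta}{2}\norm{x-z}^2 = \tfrac{\beta}{2}\norm{y-z}^2 + \beta\la y-z,\,x-y\ra + \tfrac{\beta}{2}\norm{x-y}^2$ and the analogous identity for the $\gamma$-term. Next I would bound $\psi$ from below by convexity: choosing a subgradient $g \in \partial\psi(y)$, one has $\psi(x) \ge \psi(y) + \la g,\,x-y\ra$ for all $x \in Q$. Adding these contributions gives
\[
\Phi(x) \;\ge\; \Phi(y) + \la g + \beta(y-z) + \gamma(y-u),\; x - y\ra + \tfrac{\beta+\gamma}{2}\norm{x-y}^2 .
\]

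It remains to discard the linear term, and this is the only step that needs care: since $y$ minimizes $\Phi$ over the convex set $Q$, the first-order optimality condition for constrained minimization guarantees that the subgradient $g$ above can be selected so that $\la g + \beta(y-z) + \gamma(y-u),\; x-y\ra \ge 0$ for every $x \in Q$ (i.e. the subgradient of $\psi$ at $y$ together with the gradients of the smooth quadratic parts lies in the polar of the feasible directions at $y$). Substituting this nonnegativity into the displayed inequality yields the claim. The main obstacle is thus purely the bookkeeping of subgradient calculus for the nonsmooth term $\psi$ interacting with the constraint $Q$; if one wishes to sidestep it, one may instead argue via the nonnegativity of the one-sided directional derivative of $\Phi$ at $y$ along $x-y$ (which holds because $y$ is a minimizer and $Q$ is convex) combined with the exact quadratic expansion above, which gives the same conclusion using only elementary convex analysis.
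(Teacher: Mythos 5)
Your proposal is correct and follows essentially the same route as the paper: pick a subgradient $g \in \partial\psi(y)$ satisfying the first-order optimality condition over $Q$, use the $(\beta+\gamma)$-strong convexity of the full objective to get the lower bound with the linear term $\la g + \beta(y-z) + \gamma(y-u),\, x-y\ra$, and discard that term by the optimality condition. Your explicit expansion of the quadratics around $y$ is just the unpacked version of the strong-convexity inequality the paper invokes directly.
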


\begin{proof}
    Из критерия оптимальности следует, что:
    \begin{gather*}
		\exists g \in \partial\psi(y), \,\,\, \langle g + \tfrac{\beta}{2} \nabla_y \norm{y - z}^2 + \tfrac{\gamma}{2} \nabla_y \norm{y - u}^2, x - y \rangle \geq 0 ,\,\,\, \forall x \in Q.
	\end{gather*}
    Из $\beta + \gamma$--сильной выпуклости $\psi(x) + \tfrac{\beta}{2}\norm{x - z}^2 + \tfrac{\gamma}{2}\norm{x - u}^2$ получаем, что
    \begin{align*}
		\psi(x) + \tfrac{\beta}{2}\norm{x - z}^2 &+ \tfrac{\gamma}{2}\norm{x - u}^2 \geq \psi(y) + \tfrac{\beta}{2}\norm{y - z}^2 + \tfrac{\gamma}{2}\norm{y - u}^2 \\
		&+ \langle g + \tfrac{\beta}{2} \nabla_y \norm{y - u}^2 + \tfrac{\gamma}{2} \nabla_y \norm{y - z}^2, x - y\rangle + \tfrac{\beta + \gamma}{2}\norm{x - y}^2
	\end{align*}
	Последние два неравенства доказывают лемму.
\end{proof}

\begin{theorem}
\label{theorem:gm_model}
    После $N$ шагов Алгоритма \ref{algorithm:GM} будет верно следующее неравенство:
    \begin{align*}
    f(y_{N}) - f(x_*) \leq \min\left\{\frac{LR^2}{2N}, \frac{LR^2}{2}\exp\left(-\frac{\mu}{L}N\right)\right\} + \frac{1}{\sum_{i=1}^{N}q^{N-i}}\sum_{i=1}^{N}q^{N-i}(\delta_1^{i-1} + \delta_2^{i-1}).
    \end{align*}
\end{theorem}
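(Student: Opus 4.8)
The plan is to track a one-step descent inequality for Algorithm~\ref{algorithm:GM} and then sum it up with the geometric weights $q^{N-i}$. First I would apply Lemma~\ref{lemma:str_conv} to the subproblem \eqref{gm_step}, i.e. with $\psi(x) = \psi_{\delta_k}(x,x_k)$, $\beta = L$, $\gamma = 0$, $z = x_k$, $u$ arbitrary (or simply take $\gamma=0$ and only the $z$-term), evaluated at $x = x_*$ and at $x = x_k$. The choice $x = x_k$ together with $\psi_{\delta_k}(x_k,x_k)=0$ gives $\phi_{k+1}(x_{k+1}) \le \tfrac{L}{2}\|x_{k+1}-x_k\|^2 - \tfrac{L}{2}\|x_{k+1}-x_k\|^2 \cdot(\dots)$, which combined with the upper bound in \eqref{inexact_model} (applied with $y = x_{k+1}$, $x = x_k$, and $L$ there being the model constant) yields $f(x_{k+1}) \le f(x_k) + \delta_2^k$ — so the method does not increase the objective too much. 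The choice $x = x_*$ gives
\begin{equation*}
\psi_{\delta_k}(x_*,x_k) + \tfrac{L}{2}\|x_*-x_k\|^2 \ge \psi_{\delta_k}(x_{k+1},x_k) + \tfrac{L}{2}\|x_{k+1}-x_k\|^2 + \tfrac{L}{2}\|x_*-x_{k+1}\|^2,
\end{equation*}
and then I would use the upper bound of \eqref{inexact_model} to replace $\psi_{\delta_k}(x_{k+1},x_k) + \tfrac{L}{2}\|x_{k+1}-x_k\|^2 \ge f(x_{k+1}) - f(x_k) - \delta_2^k$, and the lower bound of \eqref{inexact_model} to replace $\psi_{\delta_k}(x_*,x_k) \le f(x_*) - f(x_k) - \tfrac{\mu}{2}\|x_*-x_k\|^2 + \delta_1^k$.

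Putting these together produces the key recurrence
\begin{equation*}
f(x_{k+1}) - f(x_*) + \tfrac{L}{2}\|x_{k+1}-x_*\|^2 \le (1-\tfrac{\mu}{L})\cdot\tfrac{L}{2}\|x_k-x_*\|^2 + \delta_1^k + \delta_2^k,
\end{equation*}
that is, writing $r_k = \tfrac{L}{2}\|x_k - x_*\|^2$ and $a_k = f(x_k)-f(x_*) \ge 0$,
\begin{equation*}
a_{k+1} + r_{k+1} \le q\, r_k + \delta_1^k + \delta_2^k, \qquad q = 1-\tfrac{\mu}{L}.
\end{equation*}
In particular $r_{k+1} \le q r_k + \delta_1^k+\delta_2^k$, and iterating this from $r_0 = \tfrac{L}{2}R^2$ gives $r_i \le q^i r_0 + \sum_{j=0}^{i-1} q^{i-1-j}(\delta_1^j+\delta_2^j)$. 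Next I would isolate $a_{k+1} \le q r_k - r_{k+1} + \delta_1^k+\delta_2^k$, multiply by the weight $q^{N-1-k}$ and sum over $k=0,\dots,N-1$. The terms $q\cdot q^{N-1-k} r_k - q^{N-1-k} r_{k+1} = q^{N-k} r_k - q^{N-1-k}r_{k+1}$ telescope, leaving $q^N r_0 - r_N \le q^N r_0 = q^N \tfrac{L}{2}R^2$. Dividing by $S_N := \sum_{i=1}^{N} q^{N-i} = \sum_{k=0}^{N-1} q^{N-1-k}$ and using convexity of $f$ to bound $f(y_N)-f(x_*) \le \tfrac{1}{S_N}\sum_{i=1}^{N} q^{N-i} a_i$ (here $y_N$ is exactly the weighted average from the Output line), I obtain
\begin{equation*}
f(y_N) - f(x_*) \le \frac{q^N \tfrac{L}{2}R^2}{S_N} + \frac{1}{S_N}\sum_{k=0}^{N-1} q^{N-1-k}(\delta_1^k+\delta_2^k).
\end{equation*}

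Finally I would handle the leading term by the two standard estimates for $q^N/S_N$. When $\mu = 0$ we have $q = 1$, $S_N = N$, so the bound is $\tfrac{LR^2}{2N}$. When $\mu > 0$, $S_N = \tfrac{1-q^N}{1-q} = \tfrac{L}{\mu}(1-q^N) \ge$ (for the first step count) and $\tfrac{q^N}{S_N} \le \mu q^N/(L(1-q^N))$; a cleaner route is to note $q^N \le \exp(-\tfrac{\mu}{L}N)$ and $S_N \ge 1$, or to compare directly, giving $\tfrac{q^N \tfrac{L}{2}R^2}{S_N} \le \tfrac{LR^2}{2}\exp(-\tfrac{\mu}{L}N)$; taking the minimum of the two yields the stated $\min\{\tfrac{LR^2}{2N}, \tfrac{LR^2}{2}\exp(-\tfrac{\mu}{L}N)\}$. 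The residual noise term is already in the claimed form once indices are shifted ($\delta^{i-1}$ for $i=1,\dots,N$). The main obstacle I anticipate is bookkeeping: making sure the model constant $L$ in \eqref{inexact_model} is consistent with the regularization constant $L$ in \eqref{gm_step} (this is where a factor like $L:=2L$ may be absorbed), keeping the strong-convexity term $\tfrac{\mu}{2}\|x_*-x_k\|^2$ correctly combined with $\tfrac{L}{2}\|x_k-x_*\|^2$ to produce exactly the factor $q$, and verifying the telescoping of the weighted sum gives precisely $q^N r_0$ with no stray boundary terms. None of these is deep, but the constants must line up to get the clean statement.
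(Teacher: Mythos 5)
Your proposal is correct and follows essentially the same route as the paper: the same application of Lemma~\ref{lemma:str_conv} to \eqref{gm_step} combined with the two sides of \eqref{inexact_model} to obtain the recurrence $f(x_{k+1})-f(x_*)+\tfrac{L}{2}\norm{x_*-x_{k+1}}^2 \le q\cdot\tfrac{L}{2}\norm{x_*-x_k}^2+\delta_1^k+\delta_2^k$, followed by the same weighted telescoping and Jensen step for $y_N$. The only cosmetic difference is that the paper bounds $q^N/S_N$ uniformly via $S_N\ge 1$ and $S_N\ge Nq^N$ rather than splitting into the cases $\mu=0$ and $\mu>0$, which gives both terms of the minimum simultaneously.
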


\begin{proof}
Из \atttt{\eqref{inexact_model}} получаем:
\begin{align*}
f(x_{N}) 
&\leq f(x_{N-1}) + \psi_{\delta_{N-1}}(x_{N}, x_{N-1}) + \frac{L}{2}\norm{x_{N} - x_{N-1}}^2 + \delta_2^{N-1}.
\end{align*}
Воспользуемся леммой~\ref{lemma:str_conv} для \eqref{gm_step}:
\begin{align*}
f(x_{N})
&\leq f(x_{N-1}) + \psi_{\delta_{N-1}}(x, x_{N-1}) + \frac{L}{2}\norm{x - x_{N-1}}^2 - \frac{L}{2}\norm{x - x_{N}}^2 + \delta_2^{N-1}.
\end{align*}
Воспользуемся левым неравенством из \eqref{inexact_model}:
\begin{align}
\label{analysis_algorithm_grad:prove_1}
f(x_{N}) \leq f(x) + \frac{L - \mu}{2}\norm{x - x_{N-1}}^2 - \frac{L}{2}\norm{x - x_{N}}^2 + \delta_1^{N-1} + \delta_2^{N-1}.
\end{align}
Перепишем неравенство для $x = x_*$:
\begin{align*}
\frac{1}{2}\norm{x_* - x_{N}}^2 \leq \frac{1}{L}\left(f(x_*) - f(x_{N}) + \delta_1^{N-1} + \delta_2^{N-1}\right) +  \frac{q}{2}\norm{x_* - x_{N-1}}^2.
\end{align*}
Рекусрсивно получаем, что
\begin{align*}
\frac{1}{2}\norm{x_* - x_{N}}^2 \leq \sum_{i=1}^{N}\left(\frac{q^{N-i}}{L}(f(x_*) - f(x_{i})+ \delta_1^{i-1} + \delta_2^{i-1})\right) + \frac{q^{N}}{2}\norm{x_* - x_0}^2.
\end{align*}
Учитывая, что $\frac{1}{2}\norm{x_* - x_{N}}^2 \geq 0$ и определение $y_{N}$, мы получим:
\begin{align*}
 \frac{q^{N}}{2}\norm{x_* - x_0}^2
&\geq \sum_{i=1}^{N}\left(\frac{q^{N-i}}{L}(f(x_i) - f(x_*) - \delta_1^{i-1} - \delta_2^{i-1})\right)\\
&\geq (f(y_{N}) - f(x_*))\sum_{i=1}^{N}\frac{q^{N-i}}{L} - \frac{1}{L}\sum_{i=1}^{N}q^{N-i}(\delta_1^{i-1} + \delta_2^{i-1}).
\end{align*}
Разделим обе части последнего неравенства на $\sum_{i=1}^{N}\frac{q^{N-i}}{L}$:
\begin{align*}
f(y_{N}) - f(x_*)
&\leq \frac{\frac{q^{N}}{2}}{\sum_{i=1}^{N}\frac{q^{N-i}}{L}} \norm{x_* - x_0}^2 + \frac{1}{\sum_{i=1}^{N}q^{N-i}}\sum_{i=1}^{N}q^{N-i}(\delta_1^{i-1} + \delta_2^{i-1}).
\end{align*}        
Используя то, что $\sum_{i=1}^{N}\frac{q^{N-i}}{L} \geq \frac{1}{L}$ и $q^{N-i} \geq q^{N}$ для всех $i \geq 0$, мы получим неравенство:
\begin{align*}
f(y_{N}) - f(x_*) \leq \frac{L}{2}\min\left\{q^{N}, \frac{1}{N}\right\}\norm{x_* - x_0}^2 + \frac{1}{\sum_{i=1}^{N}q^{N-i}}\sum_{i=1}^{N}q^{N-i}(\delta_1^{i-1} + \delta_2^{i-1}).
\end{align*}
Данное неравенство и $q^N \leq \exp(-\frac{\mu}{L}N)$ завершают доказательство теоремы.
\end{proof}

Рассмотрим следствие теоремы~\ref{theorem:gm_model}.

\begin{theorem}
\label{theorem:gm_model_subgaussion}
    Пусть для последовательностей $\delta_1^k$ и $\delta_2^k$ ($k \geq 0$) верно предположение~\ref{assumption:delta_2}, тогда после $N$ шагов Алгоритма \ref{algorithm:GM} будет верно следующее неравенство:
    \begin{align}
    \E[f(y_N)] - f(x_*) \leq \min\left\{\frac{LR^2}{2N}, \frac{LR^2}{2}\exp\left(-\frac{\mu}{L}N\right)\right\} + \tilde{\delta}_1 + O(\hat{\delta}_2).
    \end{align}
    Если предположить, что $\mu = 0$, то с большой вероятностью
    \begin{equation*}
    f(y_N) - f(x_*) = \tilde{O}\left(\frac{LR^2}{N} + \frac{\hat{\delta}_1}{\sqrt{N}} + \tilde{\delta}_1 + \hat{\delta}_2\right).
    \end{equation*}
\end{theorem}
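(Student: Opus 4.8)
The plan is to deduce both assertions directly from Theorem~\ref{theorem:gm_model}. Introduce the abbreviation $D_N := \min\{LR^2/(2N),\,(LR^2/2)\exp(-\mu N/L)\}$ and the (deterministic) weights $w_i := q^{N-i}/\sum_{j=1}^{N}q^{N-j}$, so that $\sum_{i=1}^{N}w_i = 1$ and Theorem~\ref{theorem:gm_model} reads
\begin{equation*}
f(y_N) - f(x_*) \le D_N + \sum_{i=1}^{N} w_i\bigl(\delta_1^{i-1} + \delta_2^{i-1}\bigr).
\end{equation*}
Everything then reduces to controlling the weighted sums on the right, first in expectation and then with high probability.

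For the expectation bound I would argue as follows. By Assumption~\ref{assumption:delta_2} and the tower property, $\E[\delta_1^{i-1}] = \E\bigl[\E[\delta_1^{i-1}\mid \delta_{1,2}^{i-2},\dots]\bigr] \le \tilde{\delta}_1$. For the second error, $\sqrt{\delta_2^{i-1}}$ has $\hat{\delta}_2$-subgaussian conditional second moment, i.e.\ an exponential-moment bound on $\delta_2^{i-1}$; applying Jensen's inequality to that bound gives $\E[\delta_2^{i-1}] = O(\hat{\delta}_2)$. Taking expectations in the displayed inequality and using $\sum_i w_i = 1$ yields $\E[f(y_N)] - f(x_*) \le D_N + \tilde{\delta}_1 + O(\hat{\delta}_2)$, which is the first claim.

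For the high-probability bound assume $\mu = 0$; then $q = 1$, $w_i = 1/N$, $D_N = LR^2/(2N)$, and it remains to bound $\tfrac1N\sum_{i=1}^{N}\delta_1^{i-1}$ and $\tfrac1N\sum_{i=1}^{N}\delta_2^{i-1}$ with probability $\ge 1-\gamma$. For the $\delta_2$-terms the exponential-moment bound gives $\PP(\delta_2^{i-1} > t) \le \exp(1 - t/(c\hat{\delta}_2))$ for an absolute constant $c$; a union bound over $i = 1,\dots,N$ shows that $\delta_2^{i-1} = O\bigl(\hat{\delta}_2\ln(N/\gamma)\bigr)$ holds simultaneously for all $i$, hence $\tfrac1N\sum_i \delta_2^{i-1} = \tilde{O}(\hat{\delta}_2)$. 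For the $\delta_1$-terms, split $\delta_1^{i-1} = m_i + \eta_i$ with $m_i := \E[\delta_1^{i-1}\mid \delta_{1,2}^{i-2},\dots] \le \tilde{\delta}_1$ and $\eta_i := \delta_1^{i-1} - m_i$. The sequence $\{\eta_i\}$ is a martingale-difference sequence with respect to the natural filtration, and each $\eta_i$ is conditionally subgaussian with parameter $O(\hat{\delta}_1)$, since centering a conditionally subgaussian variable preserves subgaussianity up to an absolute constant. An Azuma--Hoeffding (or Freedman-type) bound for conditionally subgaussian martingale differences then gives $\bigl|\sum_{i=1}^{N}\eta_i\bigr| = O\bigl(\hat{\delta}_1\sqrt{N\ln(1/\gamma)}\bigr)$ with probability $\ge 1-\gamma$, i.e.\ $\tfrac1N\sum_i \eta_i = \tilde{O}(\hat{\delta}_1/\sqrt{N})$. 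Combining the three estimates by a union bound, with probability $\ge 1 - O(\gamma)$,
\begin{equation*}
f(y_N) - f(x_*) \le \frac{LR^2}{2N} + \tilde{\delta}_1 + \tilde{O}\!\left(\frac{\hat{\delta}_1}{\sqrt{N}}\right) + \tilde{O}(\hat{\delta}_2) = \tilde{O}\!\left(\frac{LR^2}{N} + \frac{\hat{\delta}_1}{\sqrt{N}} + \tilde{\delta}_1 + \hat{\delta}_2\right).
\end{equation*}

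The bookkeeping with expectations is routine; the main obstacle is the concentration step for $\tfrac1N\sum_i \eta_i$. One must verify carefully that the conditional (subgaussian) second-moment hypothesis on $\delta_1^k$ really does yield a conditionally subgaussian martingale-difference sequence after subtracting the conditional mean, and then invoke the appropriate martingale concentration inequality with the logarithmic factor absorbed into the $\tilde{O}$. One should also keep track of the fact that the several high-probability events (the $N$ tail events for the $\delta_2^{i-1}$ and the one martingale event for the $\eta_i$) are merged via a union bound, which only changes the constants inside the logarithms.
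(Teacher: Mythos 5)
Your proof is correct and takes essentially the same route as the paper's: the paper likewise deduces both bounds from Theorem~\ref{theorem:gm_model}, taking expectations via standard moment inequalities for subgaussian/subexponential variables for the first claim, and for $\mu=0$ noting that the weights collapse to $1/N$ and invoking concentration inequalities for the resulting averages. You have simply filled in the martingale-difference and union-bound details that the paper leaves as citations to standard results.
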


\begin{proof}
Первое неравенство можно получить используя стандартные неравенства для моментов субгуассовских случайных
величин \cite{juditsky2008}. Для второго неравенства надо заметить, что в случае $\mu = 0$ выполнено равенство:
\begin{align*}
\frac{1}{\sum_{i=1}^{N}q^{N-i}}\sum_{i=1}^{N}q^{N-i}(\delta_1^{i-1} + \delta_2^{i-1}) = \frac{1}{N}\sum_{i=1}^{N}(\delta_1^{i-1} + \delta_2^{i-1}).
\end{align*}
Для последнего слагаемого надо воспользоваться неравенствами концентрации для субгауссовских и субэкспоненциальных случайных величин \cite{juditsky2008}.
\end{proof}

\subsection{Быстрый градиентный метод для оптимизационных
задач, допускающих модель функции}
\label{section:fgm}

В случае быстрого градиентного метода нам понадобится изменить определение модели функции. Будем говорить, что функция $\psi_{\delta}(y,x)$ является $(\delta,L)$-моделью целевой функции $f(x)$, если для всех $x,y\in Q$ функция $\psi_{\delta}(y,x)$ -- выпукла по $y$, $\psi_{\delta}(x,x)\equiv 0$,
\begin{equation}\label{fgm_inexact_model}
f(x) + \psi_{\delta}(y, x) + \frac{\mu}{2}\|y-x\|^2_2 - \delta_1(y, x) \le f(y)  \le f(x) + \psi_{\delta}(y, x)+ \frac{L}{2}\|y-x\|^2_2 + \delta_2.
\end{equation}
Отметим, что теперь мы в общем случае предполагаем, что $\delta_1$ является функцией от двух аргументов $y, x \in Q$.

\begin{lemma}
\label{lemma:fast_strong_main}
	Для всех $x \in Q$ выполнено неравенство
	\begin{align*}
    		&A_{k+1} f(x_{k+1}) - A_{k} f(x_{k}) + \frac{{1 + A_{k+1} \mu}}{2}\norm{x - u_{k+1}}^2 - \frac{{1 + A_k \mu}}{2}\norm{x - u_k}^2\\ 
    		&\leq \alpha_{k+1}f(x) + A_{k}\delta_1^k(x_k,y_{k+1}) + \alpha_{k+1}\delta_1^k(x,y_{k+1}) + A_{k+1}\delta_2^k.
	\end{align*}
\end{lemma}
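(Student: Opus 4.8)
The plan is to mimic the classical estimating-sequence analysis of accelerated gradient methods, but carried out with the inexact model $\psi_{\delta_k}$ in place of the gradient. First I would apply the right inequality of \eqref{fgm_inexact_model} at the points $y_{k+1}$ and $x_{k+1}$ to bound $f(x_{k+1})$: since $x_{k+1} = \frac{\alpha_{k+1}u_{k+1} + A_k x_k}{A_{k+1}}$ and $y_{k+1} = \frac{\alpha_{k+1}u_k + A_k x_k}{A_{k+1}}$, we have $x_{k+1} - y_{k+1} = \frac{\alpha_{k+1}}{A_{k+1}}(u_{k+1} - u_k)$, so
\begin{align*}
A_{k+1}f(x_{k+1}) \le A_{k+1}f(y_{k+1}) + A_{k+1}\psi_{\delta_k}(x_{k+1}, y_{k+1}) + \frac{L\alpha_{k+1}^2}{2A_{k+1}}\norm{u_{k+1}-u_k}^2 + A_{k+1}\delta_2^k.
\end{align*}
Then I would use convexity of $\psi_{\delta_k}(\cdot, y_{k+1})$ together with the identity $x_{k+1} = \frac{\alpha_{k+1}}{A_{k+1}}u_{k+1} + \frac{A_k}{A_{k+1}}x_k$ to split $A_{k+1}\psi_{\delta_k}(x_{k+1},y_{k+1}) \le \alpha_{k+1}\psi_{\delta_k}(u_{k+1},y_{k+1}) + A_k\psi_{\delta_k}(x_k,y_{k+1})$, and I would also use $A_{k+1} = L\alpha_{k+1}^2/(1+A_k\mu)$ from \eqref{alpha_def_strong} to rewrite the quadratic coefficient as $\frac{1+A_k\mu}{2}\norm{u_{k+1}-u_k}^2$.

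Next I would bound the two $\psi$-terms by passing back to $f$ via the left inequality of \eqref{fgm_inexact_model}. The term $A_k\psi_{\delta_k}(x_k,y_{k+1})$ combines with $A_{k+1}f(y_{k+1})$: writing $A_{k+1}f(y_{k+1}) = A_k f(y_{k+1}) + \alpha_{k+1}f(y_{k+1})$ and applying the left inequality at $(x_k, y_{k+1})$ gives $A_k f(y_{k+1}) + A_k\psi_{\delta_k}(x_k,y_{k+1}) \le A_k f(x_k) - \frac{A_k\mu}{2}\norm{x_k-y_{k+1}}^2 + A_k\delta_1^k(x_k,y_{k+1})$. For the term $\alpha_{k+1}\psi_{\delta_k}(u_{k+1},y_{k+1})$ I would first write $\psi_{\delta_k}(u_{k+1},y_{k+1}) \le \psi_{\delta_k}(x,y_{k+1}) + [\text{a linear-in-}(u_{k+1}-x)\text{ correction from convexity}]$ — more precisely I would apply Lemma~\ref{lemma:str_conv} to \eqref{fgm_step} (with $\psi = \alpha_{k+1}\psi_{\delta_k}(\cdot,y_{k+1})$, $\beta = 1+A_k\mu$, $z = u_k$, $\gamma = \alpha_{k+1}\mu$, $u = y_{k+1}$) to get, for every $x\in Q$,
\begin{align*}
\alpha_{k+1}\psi_{\delta_k}(x,y_{k+1}) + \tfrac{1+A_k\mu}{2}\norm{x-u_k}^2 + \tfrac{\alpha_{k+1}\mu}{2}\norm{x-y_{k+1}}^2 \ge \alpha_{k+1}\psi_{\delta_k}(u_{k+1},y_{k+1}) + \tfrac{1+A_k\mu}{2}\norm{u_{k+1}-u_k}^2 + \tfrac{\alpha_{k+1}\mu}{2}\norm{u_{k+1}-y_{k+1}}^2 + \tfrac{1+A_{k+1}\mu}{2}\norm{x-u_{k+1}}^2,
\end{align*}
using $\beta+\gamma = 1+A_k\mu+\alpha_{k+1}\mu = 1+A_{k+1}\mu$. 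Rearranging this and feeding in $\alpha_{k+1}\psi_{\delta_k}(x,y_{k+1}) \le \alpha_{k+1}(f(x) - f(y_{k+1})) - \frac{\alpha_{k+1}\mu}{2}\norm{x-y_{k+1}}^2 + \alpha_{k+1}\delta_1^k(x,y_{k+1})$ from the left inequality of \eqref{fgm_inexact_model} produces an upper bound on $\alpha_{k+1}\psi_{\delta_k}(u_{k+1},y_{k+1})$ in terms of $\alpha_{k+1}f(x) - \alpha_{k+1}f(y_{k+1})$, the potential-difference $\frac{1+A_k\mu}{2}\norm{x-u_k}^2 - \frac{1+A_{k+1}\mu}{2}\norm{x-u_{k+1}}^2$, the error $\alpha_{k+1}\delta_1^k(x,y_{k+1})$, and leftover quadratic terms $-\frac{1+A_k\mu}{2}\norm{u_{k+1}-u_k}^2$ plus $\mu$-weighted squared distances.

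Finally I would assemble all pieces: the $A_k f(y_{k+1})$ contribution cancels against $-\alpha_{k+1}f(y_{k+1})$, the $\frac{1+A_k\mu}{2}\norm{u_{k+1}-u_k}^2$ from the smoothness step is killed by the matching negative term from Lemma~\ref{lemma:str_conv}, and the remaining $\mu$-weighted quadratic terms $-\frac{A_k\mu}{2}\norm{x_k-y_{k+1}}^2$, $-\frac{\alpha_{k+1}\mu}{2}\norm{u_{k+1}-y_{k+1}}^2$, and the sign-indefinite pieces should combine to something $\le 0$ — this is the cancellation that the choice \eqref{alpha_def_strong} and the averaging weights in \eqref{y_def}, \eqref{x_def} are designed to guarantee (one uses $A_{k+1}(x_{k+1}-y_{k+1}) = \alpha_{k+1}(u_{k+1}-u_k)$ and $A_{k+1}(y_{k+1}-x_k)=\alpha_{k+1}(u_k-x_k)$, convexity, and a completion of squares). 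What is left is exactly
\begin{align*}
A_{k+1}f(x_{k+1}) - A_k f(x_k) + \tfrac{1+A_{k+1}\mu}{2}\norm{x-u_{k+1}}^2 - \tfrac{1+A_k\mu}{2}\norm{x-u_k}^2 \le \alpha_{k+1}f(x) + A_k\delta_1^k(x_k,y_{k+1}) + \alpha_{k+1}\delta_1^k(x,y_{k+1}) + A_{k+1}\delta_2^k.
\end{align*}
I expect the main obstacle to be precisely this last bookkeeping of the $\mu$-dependent quadratic terms: showing that the collection of leftover squared-norm terms is nonpositive requires carefully tracking the three points $x_k, y_{k+1}, u_k, u_{k+1}$ through the barycentric identities and invoking the defining relation $A_{k+1}(1+A_k\mu) = L\alpha_{k+1}^2$ at the right moment. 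In the non-strongly-convex case $\mu = 0$ these terms vanish and the argument is the standard one; the strongly convex case is where the delicate algebra lives.
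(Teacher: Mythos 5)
Your proposal is correct and follows essentially the same route as the paper's proof: right model inequality at $(x_{k+1},y_{k+1})$, the barycentric identity $x_{k+1}-y_{k+1}=\tfrac{\alpha_{k+1}}{A_{k+1}}(u_{k+1}-u_k)$, convexity of $\psi_{\delta_k}(\cdot,y_{k+1})$, the relation \eqref{alpha_def_strong}, Lemma~\ref{lemma:str_conv} applied to \eqref{fgm_step}, and finally the left model inequality at $(x_k,y_{k+1})$ and $(x,y_{k+1})$. The final bookkeeping you flag as delicate is in fact immediate: the term $\tfrac{\alpha_{k+1}\mu}{2}\norm{x-y_{k+1}}^2$ is exactly absorbed by the strong-convexity part of the left model inequality at $(x,y_{k+1})$, while $\tfrac{\alpha_{k+1}\mu}{2}\norm{u_{k+1}-y_{k+1}}^2$ and $\tfrac{A_k\mu}{2}\norm{x_k-y_{k+1}}^2$ are simply dropped as nonnegative quantities on the favorable side, so no completion of squares is needed.
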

\begin{proof}
Воспользуемся \eqref{fgm_inexact_model}:
	\begin{align*}
	f(x_{k+1}) \leq f(y_{k+1}) + \psi_{\delta_k}(x_{k+1},y_{k+1})  + \frac{L}{2}\norm{x_{k+1} - y_{k+1}}^2 + \delta_2^k.
	\end{align*}
Из \eqref{x_def} и \eqref{y_def} для последовательностей $x_{k+1}$ и $y_{k+1}$ мы получим, что
	\begin{align*}
	f(x_{k+1})
	&\leq f(y_{k+1}) + \psi_{\delta_k}\left(\frac{\alpha_{k+1}u_{k+1} + A_k x_k}{A_{k+1}},y_{k+1}\right)\\
	&\hspace{2em}+ \frac{L}{2}\norm{\frac{\alpha_{k+1}u_{k+1} + A_k x_k}{A_{k+1}} - y_{k+1}}^2 + \delta_1^k\\
	&= f(y_{k+1}) + \psi_{\delta_k}\left(\frac{\alpha_{k+1}u_{k+1} + A_k x_k}{A_{k+1}},y_{k+1}\right)+\frac{L \alpha^2_{k+1}}{2 A^2_{k+1}}\norm{u_{k+1} - u_k}^2 + \delta_2^k.
	\end{align*}
Так как модель $\psi_{\delta_k}(\cdot,y_{k+1})$ выпуклая, то
	\begin{align*}
	f(x_{k+1})
	&\leq\frac{A_k}{A_{k+1}}\left(f(y_{k+1}) + \psi_{\delta_k}(x_k, y_{k+1})\right)+\frac{\alpha_{k+1}}{A_{k+1}}\left(f(y_{k+1}) + 
	 \psi_{\delta_k}(u_{k+1}, y_{k+1})\right)\\
	 &\hspace{2em}+ \frac{L \alpha^2_{k+1}}{2 A^2_{k+1}}\norm{u_{k+1} - u_k}^2 + \delta_2^k.
  \end{align*}
Из \eqref{alpha_def_strong} для последовательности $\alpha_{k+1}$ будет верно:
  \begin{align}
  \begin{split}
  \label{lemma:fast_str_conv:ineq_1}
	 f(x_{k+1})&\leq\frac{A_k}{A_{k+1}}\left(f(y_{k+1}) + \psi_{\delta_k}(x_k,y_{k+1})\right) +\frac{\alpha_{k+1}}{A_{k+1}}\Big(f(y_{k+1}) + \psi_{\delta_k}(u_{k+1},y_{k+1})\\
	 &\hspace{2em}+ \frac{{1 + A_k \mu }}{2 \alpha_{k+1}}\norm{u_{k+1} - u_k}^2\Big) + \delta_2^k.
  \end{split}
  \end{align}
Из леммы~\ref{lemma:str_conv} для оптимизационной задачи \eqref{fgm_step} будет следовать, что
\begin{align*}
&\alpha_{k+1}\psi_{\delta_k}(u_{k+1}, y_{k+1}) + \frac{1 + A_k \mu}{2}\norm{u_{k+1} - u_k}^2 +  \frac{\alpha_{k+1} \mu}{2}\norm{u_{k+1} - y_{k+1}}^2 \\
&\hspace{2em}+ \frac{1 + A_{k+1}\mu}{2}\norm{x - u_{k+1}}^2 \\ 
&\leq \alpha_{k+1}\psi_{\delta_k}(x, y_{k+1}) + \frac{1 + A_k \mu}{2}\norm{x - u_k}^2 + \frac{\alpha_{k+1} \mu}{2}\norm{x - y_{k+1}}^2.
\end{align*}
Так как $\frac{1}{2}\norm{u_{k+1} - y_{k+1}}^2 \geq 0$, то
\begin{align}
\begin{split}
  \label{lemma:fast_str_conv:ineq_2}
&\alpha_{k+1}\psi_{\delta_k}(u_{k+1}, y_{k+1}) + \frac{1 + A_k\mu}{2}\norm{u_{k+1} - u_k}^2\\ 
&\leq \alpha_{k+1}\psi_{\delta_k}(x, y_{k+1}) + \frac{1 + A_k\mu}{2}\norm{x - u_k}^2\\
&\hspace{2em}- \frac{1 + A_{k+1}\mu}{2}\norm{x - u_{k+1}}^2 + \frac{\alpha_{k+1} \mu}{2}\norm{x - y_{k+1}}^2.
\end{split}
\end{align}
Обьединим неравенства \eqref{lemma:fast_str_conv:ineq_1} и \eqref{lemma:fast_str_conv:ineq_2}, тогда
  \begin{align*}
  f(x_{k+1})
     &\leq \frac{A_k}{A_{k+1}} \left(f(y_{k+1}) + \psi_{\delta_k}(x_k,y_{k+1})\right)\\
     &\hspace{2em}+\frac{\alpha_{k+1}}{A_{k+1}}\Big(f(y_{k+1})
     + \psi_{\delta_k}(x,y_{k+1}) + { \frac{\mu}{2}\norm{x - y_{k+1}}^2}\\
	 &\hspace{2em}+ \frac{{1 + A_k \mu }}{2\alpha_{k+1}}\norm{x - u_k}^2 - \frac{{1 + A_{k+1} \mu }}{2\alpha_{k+1}}\norm{x - u_{k+1}}^2 \Big) + \delta_2^k.
\end{align*}
Воспользуемся левым неравенством из \eqref{fgm_inexact_model}:
\begin{align*}
    f(x_{k+1})
	 &\leq \frac{A_k}{A_{k+1}} f(x_k) + 
	 \frac{\alpha_{k+1}}{A_{k+1}} f(x) \\
	 &\hspace{2em}+ \frac{{1 + A_k \mu }}{2A_{k+1}}\norm{x - u_k}^2 - \frac{{1 + A_{k+1} \mu }}{2A_{k+1}}\norm{x - u_{k+1}}^2 \\
	 &\hspace{2em}+ \frac{A_{k}}{A_{k+1}}\delta_1^k(x_k,y_{k+1}) + \frac{\alpha_{k+1}}{A_{k+1}}\delta_1^k(x,y_{k+1}) + \delta_2^k.
	\end{align*}
Данное неравенство завершает доказательство леммы.
\end{proof}

\begin{theorem}
\label{theorem:fgm_model}
    После $N$ шагов Алгоритма \ref{algorithm:FGM} будет верно следующее неравенство:
    \begin{align*}
    f(x_{N}) - f(x_*) &\leq \frac{R^2}{2A_N}  + \frac{1}{A_N}\sum_{k=0}^{N-1}A_{k}\delta_1^k(x_k,y_{k+1}) \\
    &\hspace{2em}+ \frac{1}{A_N}\sum_{k=0}^{N-1}\alpha_{k+1}\delta_1^k(x_*,y_{k+1}) + \frac{1}{A_N}\sum_{k=0}^{N-1}A_{k+1}\delta_2^k.
    \end{align*}
\end{theorem}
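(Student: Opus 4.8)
The plan is to telescope the one--step estimate of Lemma~\ref{lemma:fast_strong_main}. First I would instantiate that lemma at $x = x_*$ for every $k = 0, 1, \dots, N-1$, which gives
\begin{align*}
&A_{k+1} f(x_{k+1}) - A_{k} f(x_{k}) + \frac{1 + A_{k+1} \mu}{2}\norm{x_* - u_{k+1}}^2 - \frac{1 + A_k \mu}{2}\norm{x_* - u_k}^2\\
&\leq \alpha_{k+1}f(x_*) + A_{k}\delta_1^k(x_k,y_{k+1}) + \alpha_{k+1}\delta_1^k(x_*,y_{k+1}) + A_{k+1}\delta_2^k.
\end{align*}

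Next I would sum these inequalities over $k = 0, \dots, N-1$. The left-hand side telescopes to
\begin{align*}
A_N f(x_N) - A_0 f(x_0) + \frac{1 + A_N \mu}{2}\norm{x_* - u_N}^2 - \frac{1 + A_0 \mu}{2}\norm{x_* - u_0}^2,
\end{align*}
and by the initialization of Algorithm~\ref{algorithm:FGM} ($A_0 = \alpha_0 = 0$, $u_0 = x_0$) this equals $A_N f(x_N) + \frac{1 + A_N\mu}{2}\norm{x_* - u_N}^2 - \frac{1}{2}\norm{x_* - x_0}^2$. On the right-hand side, since $\sum_{k=0}^{N-1}\alpha_{k+1} = A_N - A_0 = A_N$, the first terms sum to $A_N f(x_*)$, while the remaining three sums are exactly the error terms appearing in the statement of the theorem.

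Finally I would drop the nonnegative term $\frac{1 + A_N\mu}{2}\norm{x_* - u_N}^2$ on the left, move $\frac{1}{2}\norm{x_* - x_0}^2 = \frac{R^2}{2}$ to the right, subtract $A_N f(x_*)$ from both sides, and divide through by $A_N$ (which is strictly positive for $N \geq 1$ by the recursion \eqref{alpha_def_strong}). This produces precisely the claimed bound. I do not expect a genuine obstacle here: the argument is a routine Lyapunov/telescoping computation once Lemma~\ref{lemma:fast_strong_main} is available; the only points needing mild care are the bookkeeping of the $k=0$ contribution (harmless, since $A_0 = 0$ annihilates the $A_k\delta_1^k$ term there) and the positivity of $A_N$. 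The substantive remaining work — lower bounding $A_N$ (by a quantity of order $N^2/L$ when $\mu = 0$, and geometrically growing when $\mu > 0$) so as to turn this into an explicit convergence rate — is not needed for the statement as phrased and would be carried out separately.
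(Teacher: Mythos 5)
Your proposal is correct and follows essentially the same route as the paper: the authors likewise sum Lemma~\ref{lemma:fast_strong_main} over $k=0,\dots,N-1$ with $x=x_*$, telescope, drop the nonnegative term $\tfrac{1+A_N\mu}{2}\|x_*-u_N\|_2^2$, and divide by $A_N$. Your extra remarks on the $k=0$ bookkeeping and the positivity of $A_N$ are accurate and only make the argument more explicit than the paper's.
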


\begin{proof}
Суммирая неравентсва из леммы~\ref{lemma:fast_strong_main} для $k$ от $0$ и $N-1$ и, взяв $x = x_*$, мы получим, что
\begin{align*}
		A_{N} f(x_{N})&\leq A_{N}f(x_*) + \frac{1}{2}\norm{x_* - u_0}^2 - \frac{1 + A_N\mu}{2}\norm{x_* - u_N}^2 + \sum_{k=0}^{N-1}A_{k}\delta_1^k(x_k,y_{k+1}) \\
    &\hspace{2em}+ \sum_{k=0}^{N-1}\alpha_{k+1}\delta_1^k(x_*,y_{k+1}) + \sum_{k=0}^{N-1}A_{k+1}\delta_2^k.
\end{align*}
Так как $\frac{1 + A_N\mu}{2}\norm{x_* - u_N}^2 \geq 0$, то
\begin{align*}
		A_{N} f(x_{N}) - A_{N}f(x_*) &\leq \frac{1}{2}\norm{x_* - u_0}^2 + \sum_{k=0}^{N-1}A_{k}\delta_1^k(x_k,y_{k+1}) \\
    &\hspace{2em}+ \sum_{k=0}^{N-1}\alpha_{k+1}\delta_1^k(x_*,y_{k+1}) + \sum_{k=0}^{N-1}A_{k+1}\delta_2^k.
\end{align*}
Последнее неравенство доказывает теорему.
\end{proof}


\begin{lemma}
\label{lemma:a_n_sequence}
Для всех $N \ge 1$, 
\begin{align*}
\frac{1}{A_N} \leq \min\left\{\frac{4L}{N^2}, 2L\exp\left(-\frac{N-1}{2}\sqrt{\frac{\mu }{L}}\right)\right\}.
\end{align*}
\end{lemma}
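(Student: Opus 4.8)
The plan is to derive two independent lower bounds on $A_N$ directly from the recursion \eqref{alpha_def_strong} and then take their minimum. First I would record two elementary facts: since $\alpha_{k+1}\ge 0$ (it is the nonnegative root of the quadratic $L\alpha^2-(1+A_k\mu)\alpha-A_k(1+A_k\mu)=0$), the sequence $A_k$ is nondecreasing; and for $k=0$, because $A_0=\alpha_0=0$, \eqref{alpha_def_strong} reduces to $A_1=\alpha_1=L\alpha_1^2$, whose largest root gives $\alpha_1=1/L$, hence $A_1=1/L$ and in particular $A_k\ge 1/L>0$ for all $k\ge1$.

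For the $\tfrac{4L}{N^2}$ bound I would use the standard square-root telescoping. From \eqref{alpha_def_strong}, $L\alpha_{k+1}^2=A_{k+1}(1+A_k\mu)\ge A_{k+1}$, so $\alpha_{k+1}\ge\sqrt{A_{k+1}/L}$, and therefore
\[
\sqrt{A_{k+1}}-\sqrt{A_k}=\frac{\alpha_{k+1}}{\sqrt{A_{k+1}}+\sqrt{A_k}}\ge\frac{\sqrt{A_{k+1}/L}}{2\sqrt{A_{k+1}}}=\frac{1}{2\sqrt{L}},
\]
using $A_k\le A_{k+1}$ (this is valid already for $k=0$ with $A_0=0$). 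Summing over $k=0,\dots,N-1$ gives $\sqrt{A_N}\ge N/(2\sqrt L)$, i.e.\ $A_N\ge N^2/(4L)$, which is exactly $1/A_N\le 4L/N^2$.

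For the exponential bound I would instead keep the strong-convexity term. From \eqref{alpha_def_strong}, for $k\ge1$ we have $L\alpha_{k+1}^2=A_{k+1}(1+A_k\mu)\ge A_{k+1}A_k\mu\ge A_k^2\mu$ (again by $A_{k+1}\ge A_k$), so $\alpha_{k+1}\ge A_k\sqrt{\mu/L}$ and hence $A_{k+1}=A_k+\alpha_{k+1}\ge A_k\bigl(1+\sqrt{\mu/L}\bigr)$. Iterating from $k=1$ yields $A_N\ge A_1\bigl(1+\sqrt{\mu/L}\bigr)^{N-1}=\tfrac1L\bigl(1+\sqrt{\mu/L}\bigr)^{N-1}$. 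Since $\mu\le L$, the number $x:=\sqrt{\mu/L}$ lies in $[0,1]$, and the elementary inequality $1+x\ge e^{x/2}$ on $[0,1]$ (immediate, as $x\mapsto\ln(1+x)-x/2$ vanishes at $0$ and has derivative $\tfrac1{1+x}-\tfrac12\ge0$ there) gives $A_N\ge\tfrac1L\exp\bigl(\tfrac{N-1}{2}\sqrt{\mu/L}\bigr)\ge\tfrac1{2L}\exp\bigl(\tfrac{N-1}{2}\sqrt{\mu/L}\bigr)$, that is, $1/A_N\le 2L\exp\bigl(-\tfrac{N-1}{2}\sqrt{\mu/L}\bigr)$. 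Taking the minimum of the two bounds completes the proof.

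I do not anticipate a genuine obstacle; the only points requiring care are the degenerate base case $A_0=0$ (so the telescoping is started correctly and divisions by $A_k$ are only performed for $k\ge1$, where $A_k\ge1/L$) and the observation $\sqrt{\mu/L}\le1$ needed for $1+x\ge e^{x/2}$ — both handled above. The stated constant $2L$ is deliberately loose (one could use $L$), which is harmless.
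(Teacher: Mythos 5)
Your proof is correct. Note that the paper itself does not prove this lemma --- it only defers to Devolder's thesis and to Nesterov (remark 5.11) --- and your argument is exactly the standard one found there: the telescoping bound $\sqrt{A_{k+1}}-\sqrt{A_k}\ge \tfrac{1}{2\sqrt{L}}$ for the $N^2/(4L)$ rate, and the geometric growth $A_{k+1}\ge A_k\bigl(1+\sqrt{\mu/L}\bigr)$ together with $1+x\ge e^{x/2}$ on $[0,1]$ (which is where $\mu\le L$ enters) for the exponential rate. You handle the only delicate points --- the base case $A_0=0$, $A_1=1/L$, and restricting the division by $A_k$ to $k\ge 1$ --- correctly, so your write-up in fact supplies a proof the paper omits.
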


Результат леммы можно получить по аналогии с  \cite{devolder2013}, \cite{nesterov2013} (см. замечание 5.11.).

Далее нам будут полезны следующие предположения.  Похожее на предположение~\ref{assumption:delta_3} условие на детерминированный шум в градиенте можно встретить в работах \cite{stonyakin, stonyakin2020}.
\begin{assumption}
\label{assumption:delta_3}
Пусть даны две последовательности $\delta_1^k(x,y)$ и $\delta_2^k$ ($k \geq 0$). Случайная величина $\delta_1^k(x,y)$ имеет такое условное математическое ожидание, что
\begin{enumerate}
    \item $\E \left[\delta_1^k(x,y) |\delta_{1}^{k-1}(x,y),\delta_{2}^{k-1},\delta_{1}^{k-2}(x,y)\dots \right] \leq \tilde{\delta}_1^k(x - y)$ $ \forall x, y \in Q$, где $\tilde{\delta}_1^k(\cdot)$ есть неслучайная функция от одного аргумента.
    \item $\tilde{\delta}_1^k(\alpha z) \leq \alpha \tilde{\delta}_1^k(z)$ для всех $\alpha \geq 0$ и $z \in B(0, R)$.
    \item $\tilde{\delta}_1 < +\infty$, где $\tilde{\delta}_1 \geq \sup_{z \in B(0, R)} \tilde{\delta}_1^k(z)$.
\end{enumerate}
\end{assumption}

\begin{theorem}
\label{theorem:fgm_model_subgaussion}
    Пусть для последовательностей $\delta_1^k(x, y)$ и $\delta_2^k$ ($k \geq 0$) верно предположение~\ref{assumption:delta_2} и \ref{assumption:delta_3} для любых $x, y \in Q$, тогда после $N$ шагов Алгоритма \ref{algorithm:FGM} будет верно следующее неравенство:
    \begin{align*}
    \E[f(x_N)] - f(x_*) &\leq \min\left\{\frac{4LR^2}{N^2}, 2LR^2\exp\left(-\frac{N-1}{2}\sqrt{\frac{\mu }{L}}\right)\right\} + \tilde{\delta}_1 \\
    &\hspace{2em}+ O\left(\min\left\{N, \sqrt{\frac{L}{\mu}}\right\}\hat{\delta}_2\right).
    \end{align*}
    Предположим дополнительно, что $\mu = 0$, и для последовательности $\delta_1^k(x, y)$ выполнено предположение~\ref{assumption:delta_4}, тогда с большой вероятностью
    \begin{equation}
    \label{theorem:fgm_model_subgaussion:second}
    f(x_N) - f(x_*) = \tilde{O}\left(\frac{LR^2}{N^2} + \frac{\hat{\delta}_1}{\sqrt{N}} + \tilde{\delta}_1 + N\hat{\delta}_2\right).
    \end{equation}
\end{theorem}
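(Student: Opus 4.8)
The plan is to start from the deterministic bound of Theorem~\ref{theorem:fgm_model}, which after dividing through by $A_N$ gives
\[
f(x_N) - f(x_*) \leq \frac{R^2}{2A_N} + \frac{1}{A_N}\sum_{k=0}^{N-1}A_k\delta_1^k(x_k,y_{k+1}) + \frac{1}{A_N}\sum_{k=0}^{N-1}\alpha_{k+1}\delta_1^k(x_*,y_{k+1}) + \frac{1}{A_N}\sum_{k=0}^{N-1}A_{k+1}\delta_2^k,
\]
and then control the three error sums in expectation (first part) and with high probability (second part). The first term is handled purely by Lemma~\ref{lemma:a_n_sequence}, which yields the $\min\{4LR^2/N^2, 2LR^2\exp(-(N-1)\sqrt{\mu/L}/2)\}$ factor once we recall $R$ has been redefined so that $\norm{x_* - u_0}^2 \le R^2$.

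For the expectation bound, I would take $\E[\cdot]$ of the displayed inequality and use the tower property with the filtration generated by $\delta_{1,2}^{0},\dots,\delta_{1,2}^{k-1}$. The key point is that $x_k$, $y_{k+1}$, $\alpha_{k+1}$, $A_k$ are all measurable with respect to the past (they are built from $\delta^0,\dots,\delta^{k-1}$), so Assumption~\ref{assumption:delta_3} gives $\E[\delta_1^k(x_k,y_{k+1})\mid \mathcal F_{k-1}] \le \tilde\delta_1^k(x_k - y_{k+1})$ and similarly for $\delta_1^k(x_*,y_{k+1})$; the homogeneity property $\tilde\delta_1^k(\alpha z)\le \alpha\tilde\delta_1^k(z)$ together with $\tilde\delta_1 \ge \sup_{z\in B(0,R)}\tilde\delta_1^k(z)$ lets me bound both by $\tilde\delta_1$ (after checking the relevant points lie in $B(0,R)$, which follows from the redefinition of $R$ as the diameter). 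Since $A_k \le A_{k+1}$ and $\alpha_{k+1} = A_{k+1}-A_k$, the weights $A_k + \alpha_{k+1}$ telescope-dominate $A_{k+1}$, so the two $\delta_1$-sums contribute at most $\tilde\delta_1\cdot\frac{1}{A_N}\sum (A_k+\alpha_{k+1}) = O(\tilde\delta_1)$; a careful accounting gives exactly the $\tilde\delta_1$ term. For the $\delta_2$ term I use $\E[\delta_2^k]\le \E[(\sqrt{\delta_2^k})^2] = O(\hat\delta_2^2)$ — wait, more precisely $\sqrt{\delta_2^k}$ has $\hat\delta_2$-subgaussian second moment so $\E[\delta_2^k] = O(\hat\delta_2^2)$ is the wrong scaling; rather one keeps $\E[\delta_2^k]$ bounded by a constant times $\hat\delta_2^2$ and then $\frac{1}{A_N}\sum_{k=0}^{N-1}A_{k+1}\delta_2^k$. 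The size of this last quantity is governed by $\frac{1}{A_N}\sum_{k=0}^{N-1}A_{k+1}$, which by Lemma~\ref{lemma:a_n_sequence} and the growth $A_k \asymp k^2/L$ in the non-strongly-convex regime is $O(N)$, while in the strongly convex regime the geometric decay of $1/A_N$ relative to $A_{k+1}$ caps the sum at $O(\sqrt{L/\mu})$; combining gives the stated $O(\min\{N,\sqrt{L/\mu}\}\,\hat\delta_2)$ factor (absorbing $\hat\delta_2$ versus $\hat\delta_2^2$ into the constant or the definition of the second moment).

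For the high-probability bound in the case $\mu = 0$ (so $q=1$, $A_k \asymp k^2/L$, $\alpha_{k+1}\asymp k/L$), I would write each error sum as a martingale difference sum plus its conditional-mean part, and apply concentration. The term $\frac{1}{A_N}\sum\alpha_{k+1}\delta_1^k(x_*,y_{k+1})$ and $\frac{1}{A_N}\sum A_k\delta_1^k(x_k,y_{k+1})$ split as (i) $\frac{1}{A_N}\sum(\text{weight}_k)\bigl(\delta_1^k - \E[\delta_1^k\mid\mathcal F_{k-1}]\bigr)$, a martingale with $\hat\delta_1^k(\cdot)$-subgaussian increments (Assumption~\ref{assumption:delta_4}, giving increment scale $\le \hat\delta_1\cdot\text{weight}_k/A_N$ via the homogeneity and boundedness of $\hat\delta_1^k$), to which Azuma/Freedman-type subgaussian concentration \cite{juditsky2008} applies; the $\ell_2$-norm of the weight vector $(\alpha_{k+1}/A_N)_k$ is $\Theta(1/\sqrt N)$ since $\sum_{k\le N}\alpha_{k+1}^2/A_N^2 \asymp \sum k^2/N^4 \asymp 1/N$, producing the $\tilde O(\hat\delta_1/\sqrt N)$ term; and (ii) the conditional-mean part, bounded by $\tilde\delta_1$ deterministically exactly as in the expectation argument. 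Finally $\frac{1}{A_N}\sum A_{k+1}\delta_2^k$ is a sum of nonnegative subexponential variables (each $\delta_2^k = (\sqrt{\delta_2^k})^2$ is subexponential with parameter $\hat\delta_2^2$) with weights summing to $O(N)$, so Bernstein-type concentration for subexponentials \cite{juditsky2008} gives $\tilde O(N\hat\delta_2)$ with high probability.

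The main obstacle I anticipate is the careful bookkeeping for the two $\delta_1$ sums: one must simultaneously (a) verify measurability of the weights and iterates with respect to the natural filtration so the tower property and martingale concentration are legitimate, (b) confirm that the arguments $x_k - y_{k+1}$ and $x_* - y_{k+1}$ lie in $B(0,R)$ so that the homogeneity/boundedness clauses of Assumptions~\ref{assumption:delta_3} and~\ref{assumption:delta_4} can be invoked, and (c) get the weighted $\ell_2$-norm estimate $\sum_k(\alpha_{k+1}/A_N)^2 = \Theta(1/N)$ sharp enough to land the $\hat\delta_1/\sqrt N$ rate rather than something weaker; the $\delta_2$ terms and the leading $R^2/(2A_N)$ term are comparatively routine given Lemma~\ref{lemma:a_n_sequence} and standard concentration inequalities.
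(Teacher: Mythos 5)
Your overall architecture (start from Theorem~\ref{theorem:fgm_model}, use Lemma~\ref{lemma:a_n_sequence} for the $R^2/(2A_N)$ term, bound $\frac{1}{A_N}\sum_{k=0}^{N-1}A_{k+1}$ by $O\bigl(\min\{N,\sqrt{L/\mu}\}\bigr)$ for the $\delta_2$-sum, and apply subgaussian/subexponential concentration to the martingale parts) is exactly the paper's, but there is a genuine gap in your treatment of the sum $\frac{1}{A_N}\sum_{k=0}^{N-1}A_{k}\delta_1^k(x_k,y_{k+1})$, in both parts of the theorem. Bounding the conditional mean of $\delta_1^k(x_k,y_{k+1})$ by $\tilde{\delta}_1$ and then summing the weights gives $\tilde{\delta}_1\cdot\frac{1}{A_N}\sum_{k=0}^{N-1}(A_k+\alpha_{k+1})=\tilde{\delta}_1\cdot\frac{1}{A_N}\sum_{k=0}^{N-1}A_{k+1}$, and this is \emph{not} $O(\tilde{\delta}_1)$ as you claim: for $\mu=0$ one has $A_k\asymp k^2/L$, so this quantity is of order $N\tilde{\delta}_1$ --- it is precisely the same weight sum that produces the factor $N$ in the $N\hat{\delta}_2$ term. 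Similarly, in the concentration step the weight vector attached to this sum is $(A_k/A_N)_{k}$, whose $\ell_2$-norm is $\Theta(\sqrt{N})$, not $\Theta(1/\sqrt{N})$; your $1/\sqrt{N}$ computation is carried out only for the $\alpha_{k+1}$-weighted sum $\sum_k\alpha_{k+1}\delta_1^k(x_*,y_{k+1})$ and does not transfer.

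The missing idea, which is the crux of the paper's argument, is the identity implied by \eqref{y_def},
\begin{equation*}
x_k-y_{k+1}=\frac{\alpha_{k+1}}{A_k}\,(y_{k+1}-u_k),
\end{equation*}
combined with the positive homogeneity required in Assumptions~\ref{assumption:delta_3} and~\ref{assumption:delta_4}:
\begin{equation*}
A_k\,\tilde{\delta}_1^k(x_k-y_{k+1})=A_k\,\tilde{\delta}_1^k\Bigl(\tfrac{\alpha_{k+1}}{A_k}(y_{k+1}-u_k)\Bigr)\le \alpha_{k+1}\,\tilde{\delta}_1^k(y_{k+1}-u_k)\le \alpha_{k+1}\tilde{\delta}_1,
\end{equation*}
and identically with $\hat{\delta}_1^k$ for the subgaussian parameter. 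This converts the weight $A_k$ into $\alpha_{k+1}$, after which $\frac{1}{A_N}\sum_{k}\alpha_{k+1}=1$ yields the $\tilde{\delta}_1$ term in expectation and $\sum_k\alpha_{k+1}^2/A_N^2=O(1/N)$ yields the $\hat{\delta}_1/\sqrt{N}$ term with high probability. Without this step your argument only delivers $O(N\tilde{\delta}_1)$ and $\tilde{O}(\sqrt{N}\hat{\delta}_1)$ for that sum, which is too weak for the stated bound; you allude to ``homogeneity'' but never use the rescaling $x_k-y_{k+1}=\frac{\alpha_{k+1}}{A_k}(y_{k+1}-u_k)$ that makes it bite, and your explicit telescoping claim is arithmetically false.
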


\begin{proof}
Первое неравенство получается из тех же соображений, что и в доказательстве теоремы~\ref{theorem:gm_model_subgaussion} с учетом того, что $$\frac{1}{A_N}\sum_{k=0}^{N-1}A_{k+1} \leq O\left(\min\left\{N, \sqrt{\frac{L}{\mu}}\right\}\right)$$ (см. \cite{devolder2013}, лемма 5.11) и серии неравенств:
\begin{align*}
A_{k}\tilde{\delta}_1^k(x_k - y_{k+1}) = A_{k}\tilde{\delta}_1^k\left(\frac{\alpha_{k+1}}{A_k}(y_{k+1} - u_k)\right) \leq \alpha_{k+1} \tilde{\delta}_1^k(y_{k+1} - u_k) \leq \alpha_{k+1} \tilde{\delta}_1.
\end{align*}
В первом переходе мы воспользовались \eqref{y_def}. В предпоследнем и последнем переходе использовали предположение~\ref{assumption:delta_3}. Теперь докажем \eqref{theorem:fgm_model_subgaussion:second}.
Для доказательства неравенства 
$$\frac{1}{A_N}\sum_{k=0}^{N-1}A_{k+1}\delta_2^k \leq \tilde{O}(N\hat{\delta}_2)$$ нужно воспользоваться неравенством концентрации для субэкспоненциальных \\
случайных величин. Чтобы показать неравенство
$$\frac{1}{A_N}\sum_{k=0}^{N-1}\alpha_{k+1}\delta_1^k(x_*,y_{k+1}) \leq \tilde{O}\left(\frac{\hat{\delta}_1}{\sqrt{N}}\right)$$ нужно воспользоваться неравенством концентрации для субгауссовских случайных величин.
Неравенство $$\frac{1}{A_N}\sum_{k=0}^{N-1}A_{k}\delta_1^k(x_k,y_{k+1}) \leq \tilde{O}\left(\frac{\hat{\delta}_1}{\sqrt{N}}\right)$$ доказывается аналогично, как и предыдущее, но с учетом того, что 
\begin{align*}
A_{k}\hat{\delta}_1^k(x_k - y_{k+1}) = A_{k}\hat{\delta}_1^k\left(\frac{\alpha_{k+1}}{A_k}(y_{k+1} - u_k)\right) \leq \alpha_{k+1} \hat{\delta}_1^k(y_{k+1} - u_k) \leq \alpha_{k+1} \hat{\delta}_1.
\end{align*}
Во первом равенстве мы воспользовались \eqref{y_def}.
\end{proof}

\subsubsection{Максимум гладких функций} 
\label{subsection:maxmin}
Рассмотрим следующую задачу:

\begin{equation*}
F(x) := \max_{1 \leq i \leq m} f_i(x) \to \min_{x\in Q \subseteq \mathbb{R}^n}.
\end{equation*}

Будем предполагать, что $f_i(x)$ ($i \in [1, m]$) выпуклые и имеют $L$-Липшицев градиент, т.е. для всех $x,y\in Q$
\begin{equation}
\label{max_min:liptch}
f_i(x) + \la \nabla f_i(x), y - x \ra \leq f_i(y) \leq f_i(x) + \la \nabla f_i(x), y - x \ra + \frac{L}{2}\|y-x\|_2^2.
\end{equation}

Выберем в качестве модели функции $F(x)$:
\begin{equation*}
\psi_{\delta}(y, x) = \max_{1 \leq i \leq m} \left\{f_i(x) + \la \nabla f_i(x,\xi_i), y - x \ra \right\} - F(x),
\end{equation*}
где $\nabla f_i(x,\xi_i)$~--- независимые стохастические градиенты функций $f_i$, для которых выполнены условия \eqref{stoch_gradient_subgaussion}. Из \eqref{max_min:liptch} будет выполнено левое неравенство
\begin{align*}
F(y) &\geq F(x) + \psi_{\delta}(y, x) + \max_{1 \leq i \leq m} \left\{f_i(x) + \la \nabla f_i(x), y - x \ra \right\} \\
&\hspace{2em}- \max_{1 \leq i \leq m} \left\{f_i(x) + \la \nabla f_i(x,\xi_i), y - x \ra \right\}\\
&\geq F(x) + \psi_{\delta}(y, x) - \max_{1 \leq i \leq m} \left\{\la \nabla f_i(x,\xi_i) - \nabla f_i(x), y - x \ra \right\}
\end{align*}
и правое неравенство
\begin{align*}
F(y) &\leq F(x) + \psi_{\delta}(y, x) + \max_{1 \leq i \leq m} \left\{f_i(x) + \la \nabla f_i(x), y - x \ra \right\} \\
&\hspace{2em}- \max_{1 \leq i \leq m} \left\{f_i(x) + \la \nabla f_i(x,\xi_i), y - x \ra \right\} + \frac{L}{2}\norm{y - x}^2\\
&\leq F(x) + \psi_{\delta}(y, x) + \frac{1}{2L}\max_{1 \leq i \leq m}\norm{\nabla f_i(x) - \nabla f_i(x,\xi_i)}^2 + L \norm{y - x}^2.
\end{align*}
из \eqref{fgm_inexact_model}
с $$\delta_2 = \frac{1}{2L}\max_{1 \leq i \leq m}\norm{\nabla f_i(x) - \nabla f_i(x,\xi_i)}^2,$$
\begin{align*}
\delta_1(y,x) = \max_{1 \leq i \leq m} \left\{\la \nabla f_i(x,\xi_i) - \nabla f_i(x), y - x \ra \right\}
\end{align*} и $L := 2L$.
Из условий \eqref{stoch_gradient_subgaussion} получаем, что $\delta_1(y,x)$ является субгауссовской, а $\delta_2$~--- субэкспоненциальной случайной величиной, так как максимум субгауссовских (субэкспоненциальных) случайных величин есть субгауссовская (субэкспоненциальная) случайная величина. Более того, будут выполнены предположения~\ref{assumption:delta_4}, \ref{assumption:delta_2} и \ref{assumption:delta_3}.
Таким образом для текущей задачи с выбранной моделью применимы теоремы~\ref{theorem:gm_model_subgaussion} и \ref{theorem:fgm_model_subgaussion}.

\end{fulltext}


\end{document}